\newtheorem{thm}{Theorem}[section]
\newtheorem{cor}[thm]{Corollary}
\newtheorem{prop}[thm]{Proposition}
\newtheorem{lem}[thm]{Lemma}
\newtheorem{proposition}[thm]{Proposition}
\newtheorem{corollary}[thm]{Corollary}
\theoremstyle{definition}
\newtheorem{defn}[thm]{Definition}
\newtheorem{cons}[thm]{Construction}
\newtheorem{exmp}[thm]{Example}
\newtheorem{conj}[thm]{Conjecture}
\newtheorem*{fact}{Fact}
\newtheorem*{org}{Organization}
\newtheorem*{ack}{Acknowledgement}
\theoremstyle{remark}
\newtheorem{rem}[thm]{Remark}
\numberwithin{equation}{section}
\newtheorem{quest}[thm]{Question}
\newcommand{\beq}{\begin{equation*}\begin{aligned}}
\newcommand{\eeq}{\end{aligned}\end{equation*}}
\newcommand{\bpf}{\begin{proof}}
\newcommand{\epf}{\end{proof}}
\newcommand{\bthm}{\begin{thm}}
\newcommand{\ethm}{\end{thm}}
\newcommand{\bprop}{\begin{prop}}
\newcommand{\eprop}{\end{prop}}
\newcommand{\bcor}{\begin{cor}}
\newcommand{\ecor}{\end{cor}}
\newcommand{\blem}{\begin{lem}}
\newcommand{\elem}{\end{lem}}
\newcommand{\bdefn}{\begin{defn}}
\newcommand{\edefn}{\end{defn}}
\newcommand{\bcons}{\begin{cons}}
\newcommand{\econs}{\end{cons}}
\newcommand{\bexmp}{\begin{exmp}}
\newcommand{\eexmp}{\end{exmp}}
\newcommand{\brem}{\begin{rem}}
\newcommand{\erem}{\end{rem}}
\newcommand{\bfa}{\begin{fact}}
\newcommand{\efa}{\end{fact}}
\newcommand{\benu}{\begin{enumerate}[(1)]}
\newcommand{\eenu}{\end{enumerate}}
\newcommand{\bbslash}{\backslash\backslash}
\newcommand{\al}{\alpha}
\newcommand{\be}{\beta}
\newcommand{\ga}{\gamma}
\newcommand{\Ga}{\Gamma}
\newcommand{\id}{\operatorname{Id}}
\newcommand{\ish}{I^\sharp}
\newcommand{\ina}{I^\natural}
\newcommand{\mcm}{\mathcal{M}}
\newcommand{\p}{\prime}
\newcommand{\pp}{{\prime\prime}}
\newcommand{\aand}{~{\rm and}~}
\newcommand{\Z}{\mathbb{Z}}
\newcommand{\F}{\mathbb{F}}
\newcommand{\R}{\mathbb{R}}
\newcommand{\C}{\mathbb{C}}
\newcommand{\intg}{\mathbb{Z}}
\newcommand{\ft}{{\mathbb{F}_2}}
\newcommand{\posi}{\mathbb{N}_+}
\newcommand{\comp}{\mathbb{C}}
\newcommand{\xra}{\xrightarrow}
\newcommand{\xla}{\xleftarrow}
\DeclareMathOperator{\im}{Im}
\DeclareMathOperator{\cone}{Cone}
\DeclareMathOperator{\ind}{Ind}
\def\<#1>{\mathinner{\langle#1\rangle}}
\def\|#1|{\mathinner{\lVert #1 \rVert}}
\newcommand{\deq}{\colonequals}
\newcommand{\ybnk}{Y\backslash\backslash K}
\newcommand{\pybnk}{\partial(Y\backslash\backslash K)}
\newcommand{\cstar}{\mathbb{C}^\times}
\newcommand{\rk}{\operatorname{rank}}
\newcommand{\pt}{\{\operatorname{pt}\}}
\newcommand{\rp}[1][3]{\mathbb{RP}^{#1}}
\newcommand{\ad}{\mathrm{ad}}
\definecolor{lygreen}{HTML}{016646}
\definecolor{cpurple}{HTML}{800080}
\begin{document}

\title{Instanton \texorpdfstring{$2$}{2}-torsion and fibered knots}

\author{Deeparaj Bhat}
\address{Department of Mathematics, Columbia University}
\curraddr{}
\email{d.bhat@columbia.edu}
\thanks{}

\author{Zhenkun Li}
\address{Academy of Mathematics and Systems Science, Chinese Academy of Sciences}
\curraddr{}
\email{zhenkun@amss.ac.cn}
\thanks{}

\author{Fan Ye}
\address{Department of Mathematics, Harvard University}
\curraddr{}
\email{fanye@math.harvard.edu}
\thanks{}

\keywords{}
\date{}
\dedicatory{}
\begin{abstract}
We prove that the unreduced singular instanton homology $I^\sharp(Y,K;\mathbb{Z})$ has $2$-torsion for any null-homologous fibered knot $K$ of genus $g>0$ in a closed $3$-manifold $Y$ except for $\#^{2g}S^1\times S^2$. The main technical result is a formula of $I^\sharp(Y,K;\mathbb{C})$ via sutured instanton theory, by which we can compare the dimensions of $I^\sharp(Y,K;\mathbb{F}_2)$ and $I^\sharp(Y,K;\mathbb{C})$. As a byproduct, we show that $I^\sharp(S^3,K;\mathbb{C})$ for a knot $K\subset S^3$ admitting lens space surgeries is determined by the Alexander polynomial, while some special cases of torus knots have been previously studied by many people. Another byproduct is that the next-to-top Alexander grading summand of instanton knot homology $KHI(S^3,K,g(K)-1)$ is non-vanishing when $K$ has unknotting number one, which generalizes the Baldwin--Sivek's result in the fibered case. Finally, we discuss the relation to the Heegaard Floer theory.
\end{abstract}
\maketitle


\section{Introduction}

Singular instanton homology was introduced by Kronheimer--Mrowka \cite{kronheimer2011khovanov} to prove that Khovanov homology detects the unknot. In this paper, we focus on the unreduced variant $\ish(Y,K;\Z)$, which is a $\mathbb{Z}$-module constructed for any knot $K$ in a (closed, oriented, connected) $3$-manifold $Y$. From \cite[Theorem 8.2]{kronheimer2011khovanov}, when $Y=S^3$, there is a spectral sequence from the (unreduced) Khovanov homology $Kh(\widebar{K};\Z)$ of the mirror knot $\widebar{K}$ to $\ish(S^3,K;\Z)$. 

To the authors' knowledge, all calculated examples of Khovanov homology have $2$-torsion. Indeed, Shumakovitch \cite[Conjecture 1]{Shu14torsion} conjectured that $2$-torsion exists in $Kh(K;\Z)$ for any nontrivial knot $K$. Gujral--Wang \cite{GW2025Kh} provides a minimality property for knots without Khovanov $2$-torsion. In particular, knots with unknotting number one have Khovanov $2$-torsion. 

We ask a similar question for $\ish(Y,K;\Z)$ as follows.

\begin{quest}\label{qu: 2torsion}
For any nontrivial knot $K$ in $S^3$ or a general $3$-manifold $Y$, does $2$-torsion always exist in $\ish(Y,K;\Z)$?
\end{quest}
The construction of $\ish(Y,K)$ also provides clues for this question. Roughly speaking, the homology $\ish(Y,K)$ can be regarded as a variant of homology of the traceless representation variety\[R_0(Y,K)=\{\rho:\pi_1(Y\backslash K)\to SU(2),~\operatorname{tr}(\rho(\mu))=0 \text{ for the meridian } \mu \text{ of }K\},\]deformed by differentials from solutions of Yang-Mills equations called \emph{instantons}. When $\rho$ has nonabelian image (called an \emph{irreducible representation}), the conjugation action on $\rho$ generates a copy of $SU(2)/Z(SU(2))\cong SO(3)\cong \mathbb{RP}^3$ inside $R_0(Y,K)$. Note that\[H_*(SO(3);\Z)\cong \intg\oplus \intg\oplus\intg/2.\]By the detection result in \cite{kronheimer2011khovanov}, at least for nontrivial knots in $S^3$, such a copy always exists. In particular, when $K$ is the torus knot $T(2,n)$, we know from \cite[Observation 1.1]{kronheimer2011knot} that $R_0(S^3,K)$ consists of a copy of $S^2$ and several copies of $SO(3)$, and \[Kh(\widebar{K};\Z)\cong H_*(R(S^3,K);\Z)\cong \ish(S^3,K;\Z).\]

Moreover, it is well-known (cf.\ \cite[Theorem B.2]{LY2025torsion} for a proof based on previous references) that for any quasi-alternating knot, the spectral sequence from $Kh(\widebar{K};\Z)$ to $\ish(S^3,K;\Z)$ collapses and they both have $2$-torsion.

In this paper, we answer Question \ref{qu: 2torsion} for fibered knots in the following theorem.
\bthm\label{thm: knot has 2 torsion}
Suppose $K$ is a null-homologous knot of genus $g>0$ in $Y\not\cong \#^{2g}S^1\times S^2$. If $K$ is fibered, then $\ish(Y,K;\Z)$ has $2$-torsion.
\ethm
\brem
Motivated by Question \ref{qu: 2torsion}, the last two authors \cite[Propositions 1.5-1.7]{LY2025torsion} also showed the existence of $2$-torsion in $\ish(Y,K;\Z)$ in the following three cases.
\begin{itemize}
    \item $Y=S^3$ and $K$ has Seifert genus $g(K)=1$ and the Alexander polynomial $\Delta_K(t)\neq 1$;
    \item $Y=S^3$ and $K$ has unknotting number one;
    \item $Y=S^3_1(J)$ is the $1$-surgery on a nontrivial knot $J\subset S^3$ and $K=\widetilde{J}_1$ is the dual knot.
\end{itemize}

On the other hand, when the knot is empty, the $\Z$-module $\ish(Y;\Z)=\ish(Y,\emptyset;\Z)$ is called the \emph{framed instanton homology} and was introduced by Kronheimer--Mrowka \cite{kronheimer2010knots,kronheimer2011knot,kronheimer2011khovanov}. See \cite[\S 2.1]{LY2025torsion} for a quick review. One can also ask about the existence of $2$-torsion in $\ish(Y;\Z)$. Some examples were studied by the authors of this paper \cite{bhat2023newtriangle,LY2025torsion,LY20255surgery,LY2025dimension,Ye2025dualknot} and Ghosh--Miller-Eismeier \cite{SGMME}.
\erem






\subsection{Sketch of the proof}\label{sec: Sketch of the proofs}
From the universal coefficient theorem, the homology of a free and finitely generated chain complex has $2$-torsion if and only if the dimension in $\F_2$ coefficients is strictly larger than the dimension in $\comp$ coefficients. By \cite[Corollary 8.7]{daemi2019equivariant} (see also \cite[Lemma 7.7]{kronheimer2019web1} and \cite[Proposition 4.4]{Xie2021earring}), we have \begin{equation}\label{eq: 2dim}
    \dim \ish(Y,K;\F_2)=2\dim \ina(Y,K;\F_2),
\end{equation}where $\ina$ is the reduced variant of singular instanton homology that is related to the reduced Khovanov homology $\widetilde{Kh}$ via a spectral sequence \cite{kronheimer2011khovanov}. By the universal coefficient theorem and \cite[Proposition 1.4]{kronheimer2011khovanov}, we have\begin{equation}\label{eq: univer}
    \dim \ina(Y,K;\F_2)\ge \dim \ina(Y,K;\comp)=\dim KHI(Y,K),
\end{equation}
where $KHI(Y,K)$ is a $\comp$-vector space obtained from sutured instanton homology in \cite[\S 7]{kronheimer2010knots}. Combining (\ref{eq: 2dim}) and (\ref{eq: univer}), to show the existence of $2$-torsion in $\ish(Y,K)$, it suffices to prove \begin{equation}\label{eq: iff}
    2\dim KHI(Y,K)>\dim \ish(Y,K;\comp).
\end{equation}

The main technical result of this paper is the following.

\bthm\label{thm: formula for singular instanton}
Suppose $K$ is a knot in a closed oriented connected 3-manifold $Y$. Then there are two endomorphisms $d_{1}^+$ and $d_{1}^-$ of $KHI(Y,K)$ constructed in Definition \ref{defn: d1 differential} and  two scalars $c_+,c_-\in \cstar$ so that there is an exact triangle
\begin{equation}\label{eq: exact trianlge formula}
\xymatrix@R6ex{
KHI(Y,K)\ar[rr]^{c_+ d_{1}^++c_- d_{1}^-}&& KHI(Y,K)\ar[dl]^{}\\
&\ish(Y,K;\comp)\ar[ul]^{}&
}	
\end{equation}
Moreover, if $K$ is rationally null-homologous, there is a $\Z$-grading on $KHI(Y,K)$ induced by the (rational) Seifert surface of $K$ (called the \emph{Alexander grading}). The maps $d_{1}^\pm$ are homogeneous with different grading shifts. In this case, we have
\[\begin{aligned}
    \ish(Y,K;\comp)\cong & H_*\left(\cone(KHI(Y,K)\xra{c_+ d_{1}^++ c_- d_{1}^-} KHI(Y,K))\right)\\\cong & H_*\left(\cone(KHI(Y,K)\xra{ d_{1}^++ d_{1}^-} KHI(Y,K))\right).
\end{aligned}\]
\ethm
\brem
Kronheimer--Mrowka \cite[Figure 13]{kronheimer2011khovanov} used the skein exact triangle to deduce the following exact triangle over any coefficients\begin{equation*}
\xymatrix@R6ex{
\ina(Y,K)\ar[rr]^{\theta}&& \ina(Y,K)\ar[dl]^{}\\
&\ish(Y,K)\ar[ul]^{}&
}	
\end{equation*}The map $\theta$ vanishes over $\F_2$, which is equivalent to (\ref{eq: 2dim}). To prove (\ref{eq: iff}) and hence the existence of $2$-torsion in $\ish(Y,K;\Z)$, it suffices to show $\theta$ is nontrivial over $\C$. However, to the authors' knowledge, the only calculated examples for $\theta$ are $2$-bridge knots by Daemi--Scaduto \cite[\S 9.2]{daemi2019equivariant}. In general, it is hard to compute $\theta$ from its definition.
\erem
\brem
The maps $d_{1}^\pm$ in Theorem \ref{thm: formula for singular instanton} are the differentials introduced in \cite[Theorem 3.20]{LY2021large} (written as $d_{1,\pm}$) on the first pages of spectral sequences from $KHI(Y,K)$ to $\ish(Y;\comp)$ related to positive and negative bypass maps, respectively, which are the source of the subscripts. Note that when $K$ is not rationally null-homologous, there is no spectral sequence from $KHI(Y,K)$ to $\ish(Y;\comp)$ (neither in the Heegaard Floer setting, e.g.\ for the knot $S^1\times \pt$ inside $S^1\times S^2$). However, we can still define the maps $d_{1}^\pm$ by contact gluing maps as in Definition \ref{defn: d1 differential}; see also \cite[\S 3.1]{LY20255surgery}.
\erem
Based on Theorem \ref{thm: formula for singular instanton}, the proof of Theorem \ref{thm: knot has 2 torsion} reduces to the fact that $d_{1}^++ d_{1}^-$ is nonzero by the fiberness assumption. Indeed, the discussion by Baldwin--Sivek \cite[\S 1.4]{BS2022khovanov} implies a stronger result that both $d_{1}^\pm$ are nonzero. Since they have different grading shifts, the sum is also nonzero, and we conclude the proof. More details will be given in \S \ref{sec: Proofs of the main theorems}.


As a byproduct application of Theorem \ref{thm: formula for singular instanton}, for some special families of knots, the maps $d_{1}^\pm$ were studied by the last two authors \cite{LY2021large,LY2022integral2,LY20255surgery} and Theorem \ref{thm: formula for singular instanton} implies the following computations of $\ish(Y,K;\comp)$. 

\bprop\label{prop: l space knot computing}
Suppose $K\subset S^3$ is an instanton L-space knot as in \cite{LY2021large} (e.g.\ a knot admitting lens space surgeries). Then $\ish(S^3,K;\comp)$ is determined by the Alexander polynomial $\Delta_K(t)$. 
\eprop
\bpf
For an instanton L-space knot $K$, \cite[Theorem 1.9]{LY2021large} implies\[\Delta_K(t)=t^{n_k}-t^{n_{k-1}}+t^{n_{k-2}}-\cdots+(-1)^k+\cdots+t^{-n_{k-2}}-t^{-n_{k-1}}+t^{-n_k}\]for integers $n_k>n_{k-1}>\cdots>n_0=0$. Let $m_k=n_k-n_{k-1}$. By \cite[Theorem 5.11]{LY2021large}, up to the mirror knot, the differentials $d_{r}^\pm$ on $KHI(S^3,K)$ (written as $d_{r,\pm}$ in \cite[Theorem 3.20]{LY2021large}) have the form\[\comp_{n_k}\xra{d_{m_k}^-}\comp_{n_{k-1}}\xla{d_{m_{k-1}}^+}\comp_{n_{k-2}}\xra{d_{m_{k-2}}^-}\cdots\xla{d_{m_k}^+}\comp_{-n_{k}}\]
where $\comp_{i}$ is a summand of $KHI(S^3,K)$ in the Alexander grading $i$. We can omit $d_{r}^\pm$ for $r>1$ and compute $\rk(d_{1}^++d_{1}^-)$. Then $\ish(S^3,K;\C)$ follows from Theorem \ref{thm: formula for singular instanton}. Note that the choice of the knot $K$ or its mirror image $\widebar{K}$ does not matter because we have $\ish(S^3,\widebar{K};\comp)\cong \mathrm{Hom}(\ish(S^3,K;\comp),\comp)$.

\epf

\brem
The definition of instanton L-space knots generalizes to knots in rational homology spheres (e.g.\ the fiber in some Seifert fibered space). There is a more detailed discussion in \cite[\S 5]{LY2021large}. In such cases, one needs to be careful about the choice of the knot or its mirror.
\erem
\brem
Note that torus knots are instanton L-space knots. The instanton homology $\ish(S^3,K)$ and the spectral sequence from $Kh(\widebar{K})$ for a torus knot $K$ were previously studied by many people through various approaches; see Kronheimer--Mrowka \cite{KM2014filtration,KM2022relation}, Hedden--Herald--Kirk \cite{HHK2014traceless}, Poudel--Saveliev \cite{PS2017singular}, and Daemi--Scaduto \cite{daemi2019equivariant}. The computation in Proposition \ref{prop: l space knot computing} would indicate that for non-alternating torus knots, there exist non-vanishing higher differentials in the spectral sequences.
\erem

Another byproduct application of Theorem \ref{thm: formula for singular instanton} is the following non-vanishing result for the next-to-top term.
\begin{proposition}\label{prop: next-to-top nonvanish, main}
	If $K\subset S^3$ is a knot such that
	\[
		\dim I^{\sharp}(S^3, K) \le \dim KHI(S^3, K) + 2\dim KHI(S^3, K, g(K)),
	\]
	then \[KHI(S^3, K, g(K)-1)\neq 0,\]where $g(K)$ and $g(K)-1$ denote the top and next-to-top Alexander gradings.
\end{proposition}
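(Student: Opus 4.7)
The plan is to apply the cone formula from Theorem \ref{thm: formula for singular instanton} and argue by contradiction. Since $K \subset S^3$ is null-homologous, the cone description provides
\[
\dim I^\sharp(S^3, K; \comp) = 2\dim KHI(S^3, K) - 2\rank(d_1^+ + d_1^-),
\]
so the hypothesis rewrites as the equivalent rank lower bound
\[
\rank(d_1^+ + d_1^-) \ge \tfrac{1}{2}\bigl(\dim KHI(S^3, K) - 2\dim KHI(S^3, K, g(K))\bigr).
\]
Writing $g = g(K)$ and $V_i = KHI(S^3, K, i)$, the right-hand side equals $\tfrac{1}{2}\dim V'$ with $V' = \bigoplus_{|i|\le g-1} V_i$.

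Suppose for contradiction that $V_{g-1} = 0$. By the Alexander grading symmetry $\dim V_i = \dim V_{-i}$ for knots in $S^3$, we also have $V_{-g+1} = 0$. As already seen in the L-space knot picture in Proposition \ref{prop: l space knot computing}, $d_1^+$ raises and $d_1^-$ lowers the Alexander grading by $1$. Under the vanishing assumptions above, the top summand $V_g \oplus V_{-g}$ is therefore isolated with respect to $d_1^+ + d_1^-$: the candidate source and target gradings $\pm(g\pm 1)$ either vanish or lie outside the support. Hence $V_g \oplus V_{-g}$ splits off as a trivial direct summand, and
\[
\rank(d_1^+ + d_1^-) = \rank\bigl((d_1^+ + d_1^-)|_{V'}\bigr).
\]

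To conclude, I would bound $\rank\bigl((d_1^+ + d_1^-)|_{V'}\bigr)$ strictly below $\tfrac{1}{2}\dim V'$, contradicting the lower bound above. I plan to combine two inputs: first, the relations $(d_1^\pm)^2 = 0$ together with the convergence of the two spectral sequences to $I^\sharp(S^3; \comp) = \comp^2$, which via the same isolation argument forces $V_g \oplus V_{-g} \hookrightarrow H(d_1^\pm)$ and hence $\rank d_1^\pm \le \tfrac{1}{2}\dim V'$; and second, the parity decomposition $V' = V'_\mathrm{ev} \oplus V'_\mathrm{od}$ by Alexander grading modulo $2$, under which each $d_1^\pm$ reverses parity, giving $\rank\bigl((d_1^+ + d_1^-)|_{V'}\bigr) \le 2\min(\dim V'_\mathrm{ev}, \dim V'_\mathrm{od})$.

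The hard part will be turning these two constraints into the required strict inequality, since neither estimate alone is sharp in all cases. I expect the delicate step to be isolating the extremal configurations where both bounds are nearly attained and ruling them out using the additional structure of $d_1^\pm$ as contact-gluing bypass differentials mentioned after Theorem \ref{thm: formula for singular instanton}. This argument should parallel and generalize the Baldwin--Sivek fibered case to arbitrary knots in $S^3$ satisfying the stated numerical hypothesis.
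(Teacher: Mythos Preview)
Your setup matches the paper's: rewrite the hypothesis as a rank lower bound, assume $V_{g-1}=0$, observe that $V_{\pm g}$ split off so that $\rank(d_1^++d_1^-)=\rank(\delta|_A)$ with $A=V'$, and seek the strict inequality $\rank(\delta|_A)<\tfrac12\dim A$. But the argument you propose for this last step does not go through, and you explicitly flag it as ``the hard part'' without supplying the idea.

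Concretely, your two inputs---$(d_1^\pm)^2=0$ and the parity bound $\rank(\delta|_A)\le 2\min(\dim A_{\mathrm{ev}},\dim A_{\mathrm{od}})$---are too weak. Take $A=V_{-1}\oplus V_0\oplus V_1$ with each $V_i\cong\comp$; let $d_1^+$ be an isomorphism $V_0\to V_1$ and zero on $V_{-1}$, and let $d_1^-$ be an isomorphism $V_1\to V_0$ and zero on $V_0$. Then $(d_1^\pm)^2=0$, each $\rank d_1^\pm=1\le\tfrac32$, and the parity bound gives $\rank\delta\le 2$, but in fact $\rank\delta=2>\tfrac32$. So neither constraint, nor their combination, yields the needed inequality. (Spectral-sequence convergence to $I^\sharp(S^3;\comp)\cong\comp$---not $\comp^2$---constrains higher differentials, not $d_1^\pm$ directly, and does not rule out this configuration either.)

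What the paper uses instead is the additional relation $d_1^+\circ d_1^-=\lambda\, d_1^-\circ d_1^+$ for some $\lambda\in\cstar$ (from \cite[Theorem 6.5]{LY2022integral2}). Combined with $(d_1^\pm)^2=0$ this gives $\delta\circ\delta_\lambda=0$ for $\delta_\lambda=d_1^+-\lambda d_1^-$, hence $\im(\delta_\lambda|_A)\subset\ker(\delta|_A)$. Since $\delta$ and $\delta_\lambda$ have the same kernel on each Alexander grading (the targets $V_{i\pm1}$ are disjoint), $\rank(\delta|_A)=\rank(\delta_\lambda|_A)$, and one gets $2\rank(\delta|_A)\le\dim A$. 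The strict inequality then comes from the fact---which you do not mention---that $\dim KHI(S^3,K)$ is odd, forcing $\dim A$ odd and $\rank(\delta|_A)\le\tfrac{\dim A-1}{2}$. Note that the toy configuration above is killed precisely by the commutation relation: there $d_1^+d_1^-$ is nonzero on $V_1$ while $d_1^-d_1^+$ vanishes.
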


\begin{corollary}
	Suppose $K\subset S^3$ is a knot with unknotting number one, then
	\[
		KHI(S^3, K, g(K)-1)\neq 0.
	\]
\end{corollary}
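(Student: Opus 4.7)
The plan is to invoke Proposition \ref{prop: next-to-top nonvanish, main} for $K \subset S^3$ with unknotting number one. Since the conclusion is vacuous for the unknot, I assume $K$ is nontrivial; Kronheimer--Mrowka's instanton genus detection then gives $\dim KHI(S^3, K, g(K)) \ge 1$, so it suffices to prove the stronger bound
\[
\dim I^\sharp(S^3, K; \C) \le \dim KHI(S^3, K) + 2.
\]

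To establish this bound, let $C \subset S^3 \setminus K$ be an unknotted crossing circle whose $\pm 1$-surgery transforms $K$ into the unknot $U$. Applying the surgery exact triangle for $I^\sharp$ with respect to $C$ yields
\[
\cdots \to I^\sharp(S^3, K) \to I^\sharp(S^1 \times S^2, K') \to I^\sharp(S^3, U) \to \cdots,
\]
where $K' \subset S^3_0(C) \cong S^1 \times S^2$ is null-homologous because $\mathrm{lk}(C, K) = 0$. Since $\dim I^\sharp(S^3, U; \C) = 2$, it remains to show $\dim I^\sharp(S^1 \times S^2, K'; \C) \le \dim KHI(S^3, K)$. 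For this, I would apply Theorem \ref{thm: formula for singular instanton} to $(S^1 \times S^2, K')$, obtaining
\[
\dim I^\sharp(S^1 \times S^2, K'; \C) = 2\dim KHI(S^1 \times S^2, K') - 2\rk(d_1^+ + d_1^-),
\]
and then use the sutured surgery exact triangle obtained from $\infty$-, $0$-, $(-1)$-Dehn fillings on $C$ inside the sutured complement of $K$, which relates $KHI(S^3, K)$, $KHI(S^1 \times S^2, K')$, and $KHI(S^3, U) \cong \C$.

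The main obstacle is sharpness. The sutured exact triangle alone yields only $\dim KHI(S^1 \times S^2, K') \le \dim KHI(S^3, K) + 1$, which doubles under Theorem \ref{thm: formula for singular instanton} to a bound that is roughly twice too large. To close the gap, the bypass maps underlying $d_1^\pm$ from Definition \ref{defn: d1 differential} must be identified with the connecting maps of the sutured surgery triangle, forcing $\rk(d_1^+ + d_1^-)$ on $KHI(S^1 \times S^2, K')$ to be close to $\tfrac{1}{2} \dim KHI(S^1 \times S^2, K')$. This compatibility of bypass maps with the sutured surgery exact triangle is the hardest step of the argument.
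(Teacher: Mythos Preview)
Your argument has a genuine gap that you yourself identify: the inequality
\[
2\dim KHI(S^1\times S^2,K') - 2\rk(d_1^+ + d_1^-) \le \dim KHI(S^3,K)
\]
requires $\rk(d_1^+ + d_1^-)$ on $KHI(S^1\times S^2,K')$ to be roughly half the dimension, and nothing in the paper (or in the cited literature) gives you control of this kind for an arbitrary null-homologous knot in $S^1\times S^2$. The bypass maps $d_1^\pm$ are defined via contact handle attachments along the boundary of a tubular neighborhood of $K'$, whereas the connecting maps in your sutured surgery triangle come from handle attachments along the crossing circle $C$, which sits far away from $K'$; there is no reason these should coincide, and in general they do not. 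Without this identification the chain of inequalities collapses, and the weaker bound you obtain from the triangles alone, namely $\dim I^\sharp(S^1\times S^2,K';\C)\le 2\dim KHI(S^3,K)+2$, is useless for Proposition \ref{prop: next-to-top nonvanish, main}.

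The paper takes a much shorter route that avoids this difficulty entirely. It quotes the bound $\dim I^\sharp(S^3,K;\C)\le \dim KHI(S^3,K)+3$ for unknotting-number-one knots from \cite[Theorem 1.16]{LY2025torsion} as a black box, and then splits into two cases. If $K$ is not fibered, Kronheimer--Mrowka's fiberedness detection gives $\dim KHI(S^3,K,g(K))\ge 2$, so the hypothesis of Proposition \ref{prop: next-to-top nonvanish, main} reads $\dim I^\sharp\le \dim KHI + 4$, which is implied by the $+3$ bound. If $K$ is fibered, the result is already in \cite[Theorem 1.7]{BS2022khovanov}. Note in particular that the paper never needs the sharper $+2$ bound you are aiming for; the case split lets the weaker $+3$ bound suffice.
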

\bpf
From \cite[Theorem 1.16]{LY2025torsion}, we know that 
\[
	\dim I^{\sharp}(S^3, K) \le \dim KHI(S^3, K) + 3.
\]
From \cite[Proposition 7.16]{kronheimer2010knots} and \cite[Proposition 4.1]{kronheimer2010instanton}, we know that $\dim KHI(S^3, K)\ge 2$ if and only if $K$ is not fibered. Thus, Proposition \ref{prop: next-to-top nonvanish, main} applies to all non-fibered knots with unknotting number one. The fibered case has already been dealt with by \cite[Theorem 1.7]{BS2022khovanov}.
\epf
\brem
With the unoriented (Heegaard) knot Floer homology from Ozsv\'{a}th--Stipsicz--Szab\'{o} \cite{OSS2017unoriented}, we expect our proof about instanton homology can be exported to Heegaard Floer theory as well. The non-vanishing result of the next-to-top term in knot Floer homology was conjectured by Baldwin--Vela-Vick and Sivek \cite[Questions 1.12 and 1.13]{baldwin18fibred} and resolved for the fibered case by Baldwin--Vela-Vick \cite{baldwin18fibred}, the closed $3$-braid case by Chen \cite{Chen2025nexttotop}, and some more general case by Ni \cite{Ni21nexttop}; see also \cite{HW2018geography,Cheng2025positivebraid}.
\erem
\subsection{Relation to Heegaard Floer homology}

The analogue maps of $d_{1}^\pm$ in Heegaard Floer theory were studied by Sarkar \cite[\S 4]{sarkar15moving}, and called $\Psi$ and $\Phi$. Roughly speaking, the maps $\Psi$ and $\Phi$ (considered in the hat version for simplicity) are obtained by counting holomorphic disks passing through the basepoints $w$ and $z$ once, respectively. In Zemke's reconstruction \cite{zemke17moving} (see also \cite[\S 4.2]{Zemke2019link}), these two maps are related to some dividing sets on $K\times I\subset Y\times I$, which are exactly the dividing sets of the contact structures we use to define $d_{1}^\pm$ in Definition \ref{defn: d1 differential}.

The maps $\Psi$ and $\Phi$ can also be regarded as the differentials on the first pages of the spectral sequences from $\widehat{HFK}(S^3,K)$ and $\widehat{HFK}(S^3,-K)$ to $\widehat{HF}(S^3)$, respectively, where $-K$ is the knot $K$ with opposite orientation (not the mirror knot).

Sarkar introduced $\Psi$ and $\Phi$ to study the action on $\widehat{HFK}(S^3,K)$ induced by the basepoint moving around the knot. Up to chain homotopy, the action (over $\ft$) is\[\id+\Psi\circ \Phi\simeq \id+\Phi\circ \Psi.\]The action on the minus version was computed by Zemke \cite{zemke17moving}. An analogous result in instanton theory has not been studied, but the last two authors proved in \cite[\S 6.2]{LY2022integral2} that \[d_{1}^+\circ d_{1}^-=\lambda \cdot d_{1}^-\circ d_{1}^+\]on the homology level for some scalar $\lambda\in\cstar$.

The maps $\Psi$ and $\Phi$ are nonzero for many computed examples. The discussion in \S \ref{sec: Sketch of the proofs} also leads to the following question.

\begin{quest}\label{qu: 2torsion 2}
For any null-homologous nontrivial knot $K$ in $S^3$, are the maps $\Psi,\Phi$ in Heegaard Floer theory, and the differentials $d_{1}^\pm$ in instanton theory always non-vanishing?
\end{quest}
\brem
Since the Borromean knot in the connected sum of two copies of $S^1\times S^2$ has trivial $\Psi,\Phi$ \cite[\S 9]{ozsvath2004holomorphicknot} and trivial $d_{1}^\pm$ \cite[\S 5]{LY2022integral2}, we may replace $S^3$ by a general closed $3$-manifold $Y$ with $b_1(Y)<2$; see Yi Ni's work \cite{Ni21nexttop} for more discussion on this condition.
\erem

Due to the discussion in \S \ref{sec: Sketch of the proofs}, an affirmed answer to the above question implies that the fibered condition in Theorem \ref{thm: knot has 2 torsion} can be removed when $Y=S^3$, and hence we may obtain an affirmative answer to Question \ref{qu: 2torsion} for $S^3$. 

A similar fact in Khovanov theory was mentioned in \cite[Proposition 9.3]{KWZ19khovanov} (see also \cite[Corollary 2.18]{ILM2025Kh}). Roughly speaking, it says that if the first differential in the spectral sequence from the reduced Khovanov homology to the reduced Lee homology is nonzero, then the unreduced Khovanov homology has $2$-torsion. Inspired by \cite{HN2013unlink,LS2022split}, one might try to find some version of a spectral sequence so that Khovanov homology having no $2$-torsion could imply that singular instanton homology has no $2$-torsion, and then an affirmative answer of Question \ref{qu: 2torsion} would resolve Shumakovitch's conjecture; see \cite{Xie2021earring,kronheimer2021barnatan} for some related spectral sequences.

As Yi Ni pointed out to the authors, using the techniques in \cite{Ni21nexttop}, one may prove that if the top Alexander grading summand $\widehat{HFK}(S^3,K,g(K))$ is supported in a single $\intg/2$ homological (Maslov) grading, then both $\Psi$ and $\Phi$ are nonzero. The techniques involve the plus theory and the zero surgery formula in Heegaard Floer homology, which have not been developed in instanton theory (a partial result of the zero surgery formula can be found in \cite[\S 4]{LY2022integral2}). Furthermore, the grading assumption seems to be technical and may be removed in further study. 

Conversely, we can also get some inspiration for Heegaard Floer theory from Theorem \ref{thm: formula for singular instanton}. Rasmussen \cite{Rasmussen2005} conjectured a spectral sequence from the reduced Khovanov homology $\widetilde{Kh}(\widebar{K})$ of the mirror knot $\widebar{K}$ to the knot Floer homology $\widehat{HFK}(S^3,K)$, which was proved by Dowlin \cite{Dowlin18ss} over $\mathbb{Q}$ coefficients and recently by Nahm \cite{nahm2025unoriented,Nahm2025ss} over $\mathbb{Z}/2$ coefficients using the unoriented knot Floer homology $HFK'$ from Ozsv\'{a}th--Stipsicz--Szab\'{o} \cite{OSS2017unoriented}.

Hence $\widehat{HFK}(S^3,K)$ can be regarded as a reduced theory, while there is no obvious definition of the unreduced variant. Baldwin--Levine--Sarkar \cite[\S 3.3]{baldwin17pointed} proposed a candidate for the unreduced knot Floer homology. Roughly speaking, it is $\widehat{HFK}(S^3,K\sqcup U)$ for the split union of $K$ and the unknot $U$, which is isomorphic to two copies of $\widehat{HFK}(S^3,K)$.

Note that $KHI(Y,K)$ is conjectured to be isomorphic to $\widehat{HFK}(Y,K;\comp)$ \cite[Conjecture 7.24]{kronheimer2010knots}. By the exact triangle in Theorem \ref{thm: formula for singular instanton} and the exact sequence about $Kh$ in \cite[Proposition 3.9]{KWZ19khovanov}, some variant of the unoriented knot Floer homology can be regarded as another candidate of unreduced knot Floer homology that is more analogous to $\ish(S^3,K)$ because it does not depend on the orientation of $K$. This idea was also proposed by Nahm \cite{nahm2025unoriented}.
\bdefn
Let $CFK'_2(Y,K)$ be the mapping cone\[\cone(\widehat{CFK}(Y,K)\xra{\Psi+\Phi}\widehat{CFK}(Y,K)),\]which is equivalent to the chain complex $CFK'(Y,K)/U^2$ for the unoriented knot Floer homology $CFK'(Y,K)$ from \cite[Definition 2.1]{OSS2017unoriented}. Define \[HFK'_2(Y,K)=H_*(CFK'_2(Y,K))\]be the homology. 
\edefn
\begin{conj}
    There is an isomorphism \[HFK'_2(Y,K;\mathbb{C})\cong \ish(Y,K;\mathbb{C})\]and a spectral sequence from $Kh(\widebar{K};\mathbb{C})$ to $HFK'_2(Y,K;\mathbb{C})$.
\end{conj}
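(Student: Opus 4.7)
The plan is to decompose the conjecture into its two assertions and attack each through the bridge provided by Theorem \ref{thm: formula for singular instanton}. For the isomorphism $HFK'_2(Y,K;\C) \cong \ish(Y,K;\C)$, the strategy is to combine three ingredients: (i) the mapping cone formula of Theorem \ref{thm: formula for singular instanton}, which identifies $\ish(Y,K;\C)$ with $H_*(\cone(KHI(Y,K) \xra{d_1^+ + d_1^-} KHI(Y,K)))$; (ii) the Kronheimer--Mrowka conjectural isomorphism $KHI(Y,K) \cong \widehat{HFK}(Y,K;\C)$; and (iii) the identification of $d_1^+ + d_1^-$ with $\Psi + \Phi$ under this isomorphism. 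Step (iii) is the key technical point: by Zemke's interpretation \cite{zemke17moving}, each of $\Psi$ and $\Phi$ arises from insertion of a specific dividing set on $K \times I \subset Y \times I$, and these dividing sets coincide with the contact dividing sets used in Definition \ref{defn: d1 differential} to construct $d_1^\pm$. A direct path is therefore to upgrade the conjectured KHI--HFK isomorphism to a statement functorial under contact gluings of sutured manifolds, so that both differentials, being defined from the same contact datum, are intertwined automatically.

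For the spectral sequence from $Kh(\widebar{K};\C)$ to $HFK'_2(Y,K;\C)$, the most direct route is to build on Nahm's spectral sequence \cite{nahm2025unoriented,Nahm2025ss} from a Khovanov-type theory of $\widebar{K}$ to the unoriented knot Floer complex $CFK'(Y,K)$. Since $CFK'_2(Y,K) = CFK'(Y,K)/U^2$, one would try to induce a spectral sequence on this quotient by exhibiting a compatible quotient filtration on the Khovanov side. Alternatively, in view of the exact triangle $\widehat{HFK} \to CFK'_2 \to \widehat{HFK}$, one can attempt to glue two copies of the Dowlin spectral sequence \cite{Dowlin18ss} by matching this triangle with the exact sequence $\widetilde{Kh} \to Kh \to \widetilde{Kh}$ on the Khovanov side \cite[Proposition 3.9]{KWZ19khovanov}. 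This reduction requires proving that the reduced spectral sequence is natural with respect to the basepoint-moving maps $\Psi$ and $\Phi$, which is what allows the two copies to be assembled into a single spectral sequence of cones. Gradings should pose no essential obstruction: the Alexander and homological gradings on both $HFK'_2$ and $\ish$ are natural and can be matched after a suitable shift on the cone, by comparing against known computations.

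The principal obstacles are twofold. First, step (iii) requires a map-compatible refinement of the still open Kronheimer--Mrowka conjecture; in full generality this is presently out of reach, so a more tractable intermediate goal is to verify the isomorphism on classes of knots where both sides are independently computable, such as quasi-alternating knots, torus knots and instanton L-space knots via Proposition \ref{prop: l space knot computing}, and small families with known $\Psi, \Phi$ computations. Second, the naturality of the Dowlin--Nahm spectral sequences under $\Psi$ and $\Phi$ has not, to our knowledge, been established, and is likely to demand new geometric input, perhaps from a bordered or link-surgery framework compatible with basepoint-moving actions. A successful resolution of either obstacle individually would already give evidence for the conjecture and would in particular, via the sketch in Section \ref{sec: Sketch of the proofs}, connect the instanton $2$-torsion question with Shumakovitch's conjecture on Khovanov $2$-torsion.
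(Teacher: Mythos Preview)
The statement you are attempting to prove is labeled as a \emph{Conjecture} in the paper, and the paper offers no proof of it; it is presented as an expectation motivated by Theorem~\ref{thm: formula for singular instanton} and the analogy between $d_1^\pm$ and Sarkar's $\Psi,\Phi$. There is therefore no ``paper's proof'' to compare against.

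Your proposal is a reasonable research outline, but it is not a proof, and you yourself acknowledge this. The core gap is that both halves of your argument rest on statements that are presently open. For the isomorphism $HFK'_2\cong I^\sharp$, your step (ii) is the Kronheimer--Mrowka conjecture $KHI\cong\widehat{HFK}$, and step (iii) asks for a \emph{functorial} upgrade of that conjecture under contact gluing maps so that $d_1^\pm$ are carried to $\Psi,\Phi$; neither is known, and the second is strictly stronger than the first. For the spectral sequence, you correctly identify that one would need naturality of the Dowlin or Nahm spectral sequences with respect to basepoint-moving maps, which has not been established. The fallback of verifying the conjecture on computable families (quasi-alternating, L-space knots) is evidence, not proof. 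In short, your plan accurately maps the terrain, but every load-bearing step is itself conjectural, which is exactly why the paper records this as a conjecture rather than a theorem.
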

\brem
The spectral sequence from $Kh(\widebar{K};\mathbb{Z}/2)$ to $HFK'_2(Y,K;\mathbb{Z}/2)$ is already proven by Nahm \cite{Nahm2025ss}, with suitable reinterpretation.
\erem
\brem
The homology $HFK'_2(S^3,K)$ is an analog to $$\widetilde{BN}_2(S^3,K)=H_*(\widetilde{CBN}(S^3,K)/H^2)$$for the reduced Bar-Natan chain complex $\widetilde{CBN}$; see also \cite{Lin2019barnatan,ATZ2023barnatan,kronheimer2021barnatan} for the relation between $\widetilde{BN}_2$ and Floer homology. From \cite[\S 3.5]{KWZ19khovanov}, we only expect $HFK'_2$ is an analogous of $I^\sharp$ over a field where $2$ is invertible. In particular, we do not expect the existence of $2$-torsion in $HFK'_2$. This is why we use the notation $HFK'_2$ instead of $HFK^\sharp$. Inspired by the situation for $Kh$ \cite[Corollary 2.18]{ILM2025Kh} (see also \cite[\S 6.6]{Naot2006Kh}) and $I^\sharp$ \cite[Theorem 8.13]{daemi2019equivariant}, the best candidate of unreduced knot Floer homology $HFK^\sharp$ might be the homology of\[CFK'(Y,K)\otimes_{\Z[U]}\Z^2,\]where $U$ acts on $\Z^2$ by the matrix \[\begin{bmatrix}
0 & 0 \\
2 & 0
\end{bmatrix},\]or equivalently, just the mapping cone\[\cone(\widehat{CFK}(Y,K)\xra{2(\Psi+\Phi)}\widehat{CFK}(Y,K)).\]This candidate satisfies\[\dim HFK^\sharp(Y,K;\mathbb{Z}/2)=2\dim \widehat{HFK}(Y,K;\mathbb{Z}/2)\]and the nontriviality of $\Psi$ or $\Phi$ will imply the existence of $2$-torsion.
\erem
Using the immersed curve techniques developed by Hanselman--Rasmussen--Watson \cite{Hanselman2016,Hanselman2018}, we notice that the construction of $HFK'_2(Y,K)$ is related to the \textit{$N$-filling} in the study of L-space conjecture for graph manifolds \cite{HRRW20graph}. Here $N$ is the twisted $I$-bundle over the Klein bottle, or equivalently, the Seifert fibered manifold $D(2,2)$ over a disk with two orbifold points of order $2$. Let $\lambda$ be the curve on $\partial N$ generating $\ker(H_1(\partial N;\mathbb{Q})\to H_1(N;\mathbb{Q}))$, usually called the \textit{homological longitude}. The $N$-filling of a $3$-manifold $M$ with respect to a curve $\al$ on its toroidal boundary is the closed $3$-manifold obtained by gluing alongthe toroidal boundaries\[M\cup_{\varphi} N~\mathrm{with}~\varphi(\lambda)=\al.\]Note that the choice of $\varphi:\partial N\to \partial M$ is not unique, but $\widehat{HF}(M\cup_\varphi N)$ turns out to be independent of the choice (at least over $\ft$). From \cite[Figure 51]{Hanselman2016} and \cite[Figure 16]{Hanselman2018}, we make the following conjecture.

\begin{conj}
    Let $\mu$ be the meridian of the knot $K\subset Y$. Let $Y_N^\mu$ be the $N$-filling of $Y\backslash {\rm int} N(K)$ with respect to $\mu$. Then we have 
    \[\widehat{HF}(Y_N^\mu)\cong HFK'_2(Y,K)\oplus \widehat{HF}(Y)\oplus \widehat{HF}(Y).\]
\end{conj}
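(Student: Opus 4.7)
My plan is to prove the conjecture via the immersed-curves framework of Hanselman--Rasmussen--Watson \cite{Hanselman2016,Hanselman2018,HRRW20graph}. First, I would record the immersed multicurve invariant $\vartheta_N \subset \partial N$ of the twisted $I$-bundle $N$ over the Klein bottle, which is computed in \cite{HRRW20graph} and indicated in the pictures cited in the paper. Parametrizing $\partial N$ by the homological longitude $\lambda$ and a dual curve, $\vartheta_N$ is represented by a single immersed loop that wraps twice in the $\lambda$-direction with one self-intersection, i.e.\ a figure-eight-like curve whose straight backbone is a $2\lambda$-parallel curve together with a small bump crossing between the two strands. This reproduces $\widehat{HF}(N)\cong \ft^{\,2}$ and the nontrivial $H_1$-action.

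Next, I would invoke Hanselman's theorem that the immersed multicurve $\vartheta_K = \vartheta(Y\setminus\mathrm{int}\,N(K))$ is computed from $\widehat{CFK}(Y,K)$ together with its $\Psi$- and $\Phi$-arrows once a framing is fixed; concretely, in the standard immersed-curves picture the maps $\Psi$ and $\Phi$ appear as horizontal/vertical bigons transverse to the meridian $\mu$. By the pairing theorem for immersed curves, the gluing $\varphi(\lambda)=\mu$ yields
\[
\widehat{HF}(Y_N^\mu)\;\cong\; HF\bigl(\vartheta_K,\;\varphi_\ast\vartheta_N\bigr),
\]
where the right side is computed as a Lagrangian intersection Floer homology in the torus. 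Since $\varphi_\ast\vartheta_N$ decomposes, up to regular homotopy, as two $\mu$-parallel strands joined by a small bump, the intersections split into three groups: two copies of intersections with a single $\mu$-parallel, each computing $\widehat{HF}(Y)$ by the standard meridian pairing, and a group supported near the bump.

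The remaining step, which I expect to be the main obstacle, is to identify the chain complex on the bump intersections with $\cone(\Psi + \Phi)\simeq CFK'_2(Y,K)$. The bump generators naturally come in two copies indexed by which side of the self-intersection they sit on, giving two copies of $\widehat{CFK}(Y,K)$; the bigons crossing the bump count disks that pass once through the self-intersection of $\vartheta_N$, which under Zemke's reinterpretation of $\Psi,\Phi$ as basepoint-moving maps induced by specific dividing sets on $K\times I$ \cite{zemke17moving,Zemke2019link} should correspond precisely to the two basepoint-moving maps $\Psi$ and $\Phi$, and hence to $\Psi+\Phi$ on the sum. Matching the moduli spaces on the two sides will require a careful but, in principle, routine translation between the disk count in the universal cover of $\partial N$ near the self-intersection and the count of holomorphic disks through the $w$ and $z$ basepoints that defines $\Psi$ and $\Phi$. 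Secondary technical issues include independence of the pairing from $\varphi$ (handled by the pairing theorem over $\ft$) and promotion of the resulting isomorphism to $\comp$ coefficients; the latter is necessary for the statement as written and should follow once the differential is identified integrally by a filtration argument.
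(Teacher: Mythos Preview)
The statement you are attempting to prove is labeled a \emph{Conjecture} in the paper, and the paper gives no proof of it; the authors simply motivate it by pointing to \cite[Figure 51]{Hanselman2016} and \cite[Figure 16]{Hanselman2018}. So there is no ``paper's own proof'' to compare against, and your proposal should be read as an outline toward resolving an open problem rather than a reconstruction of an existing argument.

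As an outline, your strategy is the natural one suggested by the paper's own references, but several of the steps you describe as routine are in fact the content of the conjecture. First, the immersed-curves pairing theorem and the translation from $\widehat{CFK}$ to an immersed multicurve are established for rational homology spheres (in particular for knots in $S^3$), not for arbitrary $Y$; the conjecture is stated for a general $3$-manifold $Y$, so you would need either to restrict hypotheses or to invoke an extension of the Hanselman--Rasmussen--Watson machinery that is not in the literature. Second, your description of $\vartheta_N$ is schematic; the actual curve for $N$ carries a nontrivial local system in some conventions, and the precise decomposition into ``two $\mu$-parallel strands plus a bump'' needs to be made rigorous before one can split the intersection Floer complex. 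Third, and most substantively, the identification of the bump contribution with $\cone(\Psi+\Phi)$ is exactly the heart of the matter: the bigons crossing the self-intersection of $\vartheta_N$ must be matched with the basepoint-crossing disks defining $\Psi$ and $\Phi$, and this requires more than Zemke's dividing-set reinterpretation---one needs a direct comparison of moduli spaces or a model computation, and neither is supplied. Finally, the independence of $\widehat{HF}(Y_N^\mu)$ from the choice of $\varphi$ is asserted in the paper only over $\ft$, so your promotion to $\comp$ coefficients is another genuine step, not a formality.
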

\begin{org}
    The paper is organized as follows. In \S \ref{sec: strategy of the proofs}, we use the octahedral diagram to reduce the proof of Theorem \ref{thm: formula for singular instanton} to some commutative diagram, and we prove Theorem \ref{thm: knot has 2 torsion} and Proposition \ref{prop: next-to-top nonvanish, main} based on Theorem \ref{thm: formula for singular instanton}. In \S \ref{sec: topology setups}, we describe the topology of cobordisms in the commutative diagram. In \S \ref{sec: The commutative diagram}, we prove the commutative diagram by a stretching argument and a model calculation that fixes the sign.

\end{org}
\begin{ack}
The authors thank Charles Stine for his helpful comments on Kirby diagrams. We also thank Peter Kronheimer, Tom Mrowka, Ciprian Manolescu, John A. Baldwin, Jen Hom, Kristen Hendricks, Yi Ni, Masaki Taniguchi, Ali Daemi, Chris Scaduto, Josh Wang, Matt Hogancamp, Onkar Singh Gujral, and Ian Zemke for valuable discussions and comments throughout the long time preparation of this project. The third author is partially supported by Simons Collaboration Grant \#271133 through Peter Kronheimer. The third author thanks Yi Ni for the invitation to Caltech and Yi Liu for the invitation to BICMR at Peking University during this project.
\end{ack}

\section{Strategy of the proofs}\label{sec: strategy of the proofs}

This section is devoted to reducing the proofs of Theorems \ref{thm: knot has 2 torsion} and \ref{thm: formula for singular instanton} and Proposition \ref{prop: next-to-top nonvanish, main} to some commutative diagram, which will be proved in \S \ref{sec: The commutative diagram}. 

\subsection{The octahedral diagram}\label{sec: The octahedral lemma}

The main ingredient of the exact triangle in Theorem \ref{thm: formula for singular instanton} is the octahedral lemma for the derived category of chain complexes over $\C$ (for example, see \cite[Proposition 10.2.4]{homologicalalgebra94} and \cite[Lemma A.3.10]{ozsvathbookgrid}).

For simplicity, we adopt the setups from \cite[\S 2]{LY2025torsion} for the framed instanton homology $\ish(Y)$, the sutured instanton homology $SHI(M,\ga)$, and the (sutured) instanton knot homology $KHI(Y,K)$. 

In particular, for a framed knot $K\subset Y$, we write $Y\bbslash K=Y\backslash{\rm int}N(K)$. Let $\Gamma_n$ be the suture on $\partial (Y\bbslash K)\cong T^2$ consisting of a pair of oppositely oriented non-separating simple closed curves of slope $-n$ (the minus sign is chosen to be consistent with the notations in the last two authors' previous work). Let $\Gamma_{\mu}=\mu\cup(-\mu)$ consist of two meridians with opposite orientations. As defined in \cite[\S 7]{kronheimer2011knot}, we have\begin{equation}\label{eq: I sharp U}
    \ish(Y;\C)\cong KHI(Y,U)
\end{equation}for the unknot $U$, and
\begin{equation}\label{eq: KHI}
    KHI(Y,K) = SHI(Y\bbslash K,\Gamma_{\mu})\aand KHI(Y_n(K),\widetilde{K}_n) = SHI(Y\bbslash K,\Gamma_{-n}),
\end{equation}
for the dual knot $\widetilde{K}_n$ in the manifold $Y_n(K)$ obtained from $Y$ by $n$-surgery along $K$.

Note that we can use the Floer's excision cobordism in \cite[\S 5.4]{kronheimer2011khovanov} to identify $KHI(Y,K)$ with $I^\natural(Y,K;\C)$, which is functorial with respect to cobordism maps that are supported in the region away from the neighborhood of $K$; see \cite[\S 2]{Ye2025dualknot} for more details.

Then we form the following octahedral diagram and will show it satisfies the assumption of the octahedral lemma.

\begin{equation}\label{eq: octahedral used 1}
\xymatrix{
	&\ish (Y,K;\comp)\ar@{..>}[dr]\ar[ddl]&\\
	KHI(Y,K) \ar@{..>}[ur]\ar[d]&&KHI(Y,K)\ar@{..>}[ll]^{h\circ l}\ar[ddl]^>>>>>>>>>>>>{l}\\
\ish (Y_0(K);\comp)\ar[dr]_{f}\ar[rr]^{g\circ f}&&\ish (Y_2(K);\comp)\ar[u]\ar[uul]\\
	&KHI(Y_1(K),\widetilde{K}_1)\ar[uul]^<<<<<<<<<<<<{h}\ar[ur]_{g}&
	}
    \end{equation}
    


Note that the fourth exact triangle is exactly what we want in Theorem \ref{thm: formula for singular instanton}. We will also show the composition map $h\circ l$ is the map $c_+d_{1}^++c_-d_{1}^-$. Since we can pick any framing of $K$, the surgery coefficients $0,1,2$ in (\ref{eq: octahedral used 1}) can be replaced by $n,n+1,n+2$ for any $n\in\intg$.

\subsection{Triangles in sutured instanton homology}\label{sec: Triangles in sutured instanton homology}

The triangles in (\ref{eq: octahedral used 1}) associated with $f$ and $g$ follow from the work of the last two authors \cite[Lemma 4.9]{LY2020} by choosing suitable bases for the torus boundary. To be self-contained, we state the proof for the reader's convenience.

\bprop\label{prop: -1 triangle}
Suppose $K$ is a framed knot in a closed 3-manifold $Y$. Then there exist exact triangles
\begin{equation}\label{eq: H dual}
\xymatrix{
SHI(\Ga_{-1})\ar[rr]^{H_\al}&&SHI(\Ga_{\mu})\ar[dl]^{G_{\al}}\\
&\ish(Y_0(K);\comp)\ar[ul]^{F_{\al}}&
}	
\xymatrix{
SHI(\Ga_{\mu})\ar[rr]^{H_\be}&&SHI(\Ga_{-1})\ar[dl]^{G_{\be}}\\
&\ish(Y_2(K);\comp)\ar[ul]^{F_{\be}}&
}	
\end{equation}where we omit $Y\bbslash K$ in $SHI$ for simplicity.
\eprop
\brem\label{rem: minus sign}
Note that in the statement of \cite[Lemma 4.9]{LY2020}, we add minus signs for manifolds and sutures to denote the opposite orientations. Those notations were used to make the statement compatible with contact gluing maps (especially bypass maps), which are not essential in the proof of \cite[Lemma 4.9]{LY2020}, and indeed in \cite[Lemma 3.21]{LY2020}. If we apply the lemma to the manifolds with opposite orientations, we will obtain a statement about the manifolds without minus signs (two minus signs just cancel out). Alternatively, we may also take the dual spaces in the original statement with minus signs because $SHI(-M,-\ga)$ is canonically isomorphic to $\operatorname{Hom}(SHI(M,-\ga),\mathbb{C})$ by \cite[Theorem 1.2 (3)]{li2018gluing}, and $SHI(M,-\ga)$ is canonically isomorphic to $SHI(M,\ga)$ by the proof of \cite[Lemma 2.5]{BS2022khovanov}. Unpacking the definitions of maps and isomorphisms, one can show that these two constructions are equivalent.
\erem
\bpf[Proof of Proposition \ref{prop: -1 triangle}]
Let $\al^\p$ and $\be^\p$ be the curves inside $\ybnk$ obtained by pushing the curves \[\al=\lambda,\be=2\mu+\lambda\subset \pybnk\] into the interior, respectively. Suppose $\al^\p$ and $\be^\p$ are framed by the surface framing from $\pybnk$. Then the two exact triangles follow from the surgery exact triangles along $\al^\p$ and $\be^\p$ with framings $(\infty,0,1)$, respectively: the $\infty$-surgery does not change anything; the $1$-surgery is equivalent to a Dehn twist on $\pybnk$ along $\al$ or $\be$, which twists the suture; the $0$-surgery is equivalent to a (contact) $2$-handle attachment along $\al$ or $\be$ and hence fills the knot which leads to the sutured manifold $(Y_0(K),\delta)$ or $(Y_2(K),\delta)$ (\emph{cf.} \cite[\S 3.3]{baldwin2016instanton}). Indeed, the $0$-surgery makes the knot complement become the complement of an unknot inside $Y_0(K)$ or $Y_2(K)$, and the suture becomes the meridians of the unknot. From (\ref{eq: I sharp U}), we know that $0$-surgery gives $\ish(Y_0(K);\comp)$ or $\ish(Y_2(K);\comp)$. 

Note that the original surgery exact triangle in instanton theory has some extra bundle sets in the surgery manifolds and cobordisms (cf.\ \cite[Theorem 1.2]{scaduto2015instanton}), but one can cancel some of them using the strategy in \cite[\S 4]{BS2022khovanov} and \cite[\S 8]{LY2025dimension}. More precisely, we need to use a closure of a balanced sutured manifold in which $\al^\p$ (or $\be^\p$ for the second triangle) bounds a punctured torus in the closed $3$-manifold and the outgoing framing of the punctured torus is exactly the surgery framing of $\al^\p$. Then \cite[Corollary 8.2]{LY2025dimension} implies that the bundle sets in all three closed $3$-manifolds of the triangle and the cobordisms associated to $G_\al$ and $F_\al$ can be canceled. Note that the bundle set in the cobordism associated to $H_\al$ is nontrivial, which is the union of the cocore disk and the punctured torus, and has self-intersection $1$. We cannot further use the naturality trick as in \cite[\S 4]{BS2022khovanov} to cancel the bundle set in $H_\al$ because now the three vertices in the triangle are not symmetric and it might be hard to construct the punctured torus with the framing property in the closures of the other two balanced sutured manifolds.
\epf

\brem\label{rem: new curve}
If we start with the dual curve $\al^\pp$ of $\al^\p$ inside $(Y\bbslash K, \Ga_1)$ with the induced framing in the proof of Proposition \ref{prop: -1 triangle}, then the exact triangle should be associated to framings $(\infty,-1,0)$, i.e.\, the map $H_\al$ is obtained by the $(-1)$-surgery along $\al^\pp$. We also define $\be^\pp$ to be the dual curve of $\be^\p$ inside $(Y\bbslash K,\Ga_\mu)$ with the induced framing. Then the map $H_\be$ is obtained by the $(-1)$-surgery along $\be^\pp$. 
\erem

Then we can use the bypass maps introduced in \cite[\S 4]{BS2022khovanov} to describe $H_\al$ and $H_\be$ as follows.

Recall that bypass maps are obtained by composing the maps associated with a contact $1$-handle attachment and then a contact $2$-handle attachment in a specific way related to a bypass arc intersecting the suture at three points. In particular, they are contact gluing maps, which are maps between sutured manifolds with opposite orientations on the manifolds and the sutures. As mentioned in Remark \ref{rem: minus sign}, we can start with the sutured manifolds with opposite orientations and apply the results about bypass maps. Or equivalently, we can take the original bypass maps and consider the induced maps between the dual spaces.

Following the notations in \cite[\S 4.2]{LY2020}, let \[\psi_{+,\mu}^{-1},\psi_{-,\mu}^{-1}:SHI(-\ybnk,-\Ga_\mu)\to SHI(-\ybnk,-\Ga_{-1})\]
\[\psi_{+,-1}^{\mu},\psi_{-,-1}^{\mu}:SHI(-\ybnk,-\Ga_{-1})\to SHI(-\ybnk,-\Ga_{\mu})\]be positive and negative bypass maps, where $-1$ does not denote the inverse map but the suture $\Ga_{-1}$. Due to the above discussion, we define \[(\psi_{+,\mu}^{-1})^\vee,(\psi_{-,\mu}^{-1})^\vee:SHI(\ybnk,\Ga_{-1})\to SHI(\ybnk,\Ga_{\mu})\]
\[(\psi_{+,-1}^{\mu})^\vee,(\psi_{-,-1}^{\mu})^\vee:SHI(\ybnk,\Ga_{\mu})\to SHI(\ybnk,\Ga_{-1})\]between the dual spaces. By the previous work of the last two authors and Remark \ref{rem: new curve}, we have the following lemmas.

\blem[{\cite[Proposition 4.1]{LY2022integral1}}]\label{lem: -1 surgery}
Let $H_\al,H_\be$ be maps in (\ref{eq: H dual}). Then there exist scalars $c_1,c_2,c_3,c_4\in\cstar$ so that \[H_\al=c_1(\psi_{+,-1}^{\mu})^\vee+c_2(\psi_{-,-1}^{\mu})^\vee\aand H_\be=c_3(\psi_{+,\mu}^{-1})^\vee+c_4(\psi_{-,\mu}^{-1})^\vee.\]
\elem
\brem
In the proof of \cite[Proposition 4.1]{LY2022integral1}, the last authors had not considered the bundle set in $H_\al$ carefully and only studied the case where the cobordism associated to $H_\al$ has no extra bundle set. However, the extra bundle set from the proof of Proposition \ref{prop: -1 triangle} does not affect the result. Roughly, the proof of \cite[Proposition 4.1]{LY2022integral1} relies on cutting out the neighborhood $V\cong S^1\times D^2$ of $\al^\p$ and gluing back via contact gluing map. The extra bundle set lies in the closure of $(V,\Ga_1)$ and hence all proofs apply verbatim.
\erem
\blem[{\cite[Corollary 4.37]{LY2020}}]\label{lem: composition of bypass maps}
We have\[\psi_{+,\mu}^{-1}\circ \psi_{-,-1}^{\mu}=0\aand \psi_{-,\mu}^{-1}\circ \psi_{+,-1}^{\mu}=0.\]
\elem
\brem
In \cite[\S 4]{LY2020}, we assumed $K$ is (rationally) null-homologous for simplicity. This condition is not necessary for our purposes because \cite[Corollary 4.37]{LY2020} follows from the bypass exact triangle \cite[\S 4]{BS2022khovanov} and \cite[Lemma 4.34]{LY2020}, and the latter lemma follows from Honda’s classification of tight contact structures on $T^2\times I$ \cite{honda2000classification}. Those results do not rely on the assumption that $K$ is (rationally) null-homologous.
\erem
\bdefn[{\cite[\S 3.4]{LY2021large} and \cite[Formula (6.2)]{LY2022integral2}}]\label{defn: d1 differential}
Define \[d_{1}^\pm\deq (\psi_{\pm,-1}^{\mu})^\vee\circ (\psi_{\pm,\mu}^{-1})^\vee: SHI(\ybnk,\Ga_\mu)\to SHI(\ybnk,\Ga_\mu).\]
\edefn

The following corollary follows from Lemma \ref{lem: -1 surgery} and Proposition \ref{lem: composition of bypass maps} directly.
\bcor\label{cor: composition of H maps}
Let $c_+=c_3c_1$ and $c_-=c_2c_4$. We have
\[H_\al\circ H_\be=c_3c_1(\psi_{+,-1}^{\mu})^\vee\circ (\psi_{+,\mu}^{-1})^\vee+c_4c_2 (\psi_{-,-1}^{\mu})^{\vee}\circ (\psi_{-,\mu}^{-1})^\vee=c_+ d_{1}^++c_- d_{1}^-.\]
\ecor

Note that $H_\al=h$ and $H_\be=l$ in the octahedral diagram (\ref{eq: octahedral used 1}). We will use this fact in the proof of Theorem \ref{thm: formula for singular instanton}.

\subsection{Triangles in singular instanton homology}\label{sec: Triangles in singular instanton homology}

The triangle associated with $g\circ f$ in (\ref{eq: octahedral used 1}) follows from the work of the first author \cite{bhat2023newtriangle}, which indeed also works over an arbitrary coefficient ring.
\bprop[{\cite[Theorem 5.1 and Remark 7.5]{bhat2023newtriangle}}]\label{prop: original triangle}
Suppose $K$ is a framed knot in a closed 3-manifold $Y$. Suppose $f_0 = f_0^+ - f_0^-$ is defined in \cite[\S 7.2]{bhat2023newtriangle}. Then for any commutative ring $R$, there exists an exact triangle
\begin{equation}\label{eq: singular instanton triangle 0}
\xymatrix{
\ish(Y_0(K);R)\ar[rr]^{f_0}&&\ish(Y_2(K);R)\ar[dl]^{}\\
&\ish(Y,K;R)\ar[ul]^{}&
}	
\end{equation}
\eprop
Note that $f_0^\pm$ are defined by cobordism with a cone point over $L(2,1)\cong \mathbb{RP}^3$. We can interpret them as the usual instanton cobordism maps as follows.

\bprop\label{prop: interpretation of f_0}
Suppose $K$ is a framed knot in a closed 3-manifold $Y$. For $n\in\mathbb{Z}$, let \[W^n_{n+1}:Y_n(K)\to Y_{n+1}(K)\]be the surgery cobordism and let $W^n_{n+2}=W^{n+1}_{n+2}\circ W^n_{n+1}$ be the composition cobordism, as shown in Figure \ref{fig: W}. Let $\Omega\subset W^n_{n+2}$ be the union of the cocore disk in $W^n_{n+1}$ and the core disk in $W^{n+1}_{n+2}$, which is a $2$-sphere of self-intersection $-2$ formed by the Seifert disk bounded by $\alpha$ in Figure \ref{fig: W}, capped off by the core of the $2$-handle. Then we have\[f_0^+=I^\sharp(W^0_{2})\aand f_0^-=I^\sharp_\Omega (W^0_{2}),\]where the subscript $\Omega$ denotes the bundle data.
\eprop
\begin{figure}
\begin{minipage}{0.4\textwidth}
	\begin{overpic}[width = 0.8\textwidth]{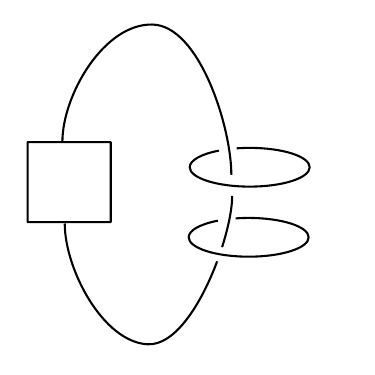}
		\put (14,48){$K$}
		\put (34,0){$[n]$}
		\put (79,45){$-1$}
		\put (79,30){$-1$}
	\end{overpic}
\end{minipage}
\begin{minipage}{0.4\textwidth}
	\begin{overpic}[width = 0.8\textwidth]{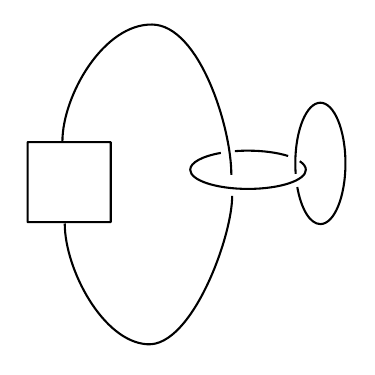}
		\put (14,48){$K$}
		\put (34,0){$[n]$}
		\put (65,41){$-1$}
		\put (79,30){$-2$}
		\put (83,75){$\alpha$}
	\end{overpic}
\end{minipage}
\caption{Two Kirby diagrams of $W^n_{n+2}$ related by an obvious handle-slide. The rightmost unknot in the right subfigure is denoted by $\alpha$.}\label{fig: W}
\end{figure}

\bpf
For the purpose of this proof, let $W = W^0_2$, $X$ be the tubular neighborhood of $\Omega$, and $M = W \backslash X$.

We also set some conventions only relevant to this proof. $\mcm^\sharp_{\omega} (Z)$ will be used to refer to the moduli space of ASD connections (modulo determinant 1 gauge transformations) on the pair $(Z, H \times \gamma)$ with bundle data described by $\omega \cup (b \times \gamma)$. Here, $H$ denotes the hopf-link, $b$ is an arc connecting the two components of $H$, and $\gamma$ is a path implicitly specified when defining $I^\sharp(Z)$. 

Neck stretching along $\partial X \cong \rp$, we see that the counts of the $0$-dimensional moduli spaces are related as
\begin{equation} \label{eq: reinterpretation}
\# \mcm^\sharp_\omega(W) = \# (\mcm^\sharp(M) \times_{\chi(\rp)} \mcm_\omega (X)),
\end{equation}
for all $\omega \in \{ \varnothing, \Omega \}$. We make a few remarks at this stage. There is a potential gluing-parameter in play above which is not reflected in the notation. Next, while we can use perturbations to ensure the moduli $\mcm^\sharp (M)$ is transverse, a similar fix may not apply to $\mcm_\omega (X)$ if it contains reducibles; however, since $H^+(X; \R) = 0$, this is a non-issue if the elements are central connections. Lastly, $\chi (\rp)$ is the $SU(2)$ character variety; it consists of two points $\theta_\pm$ where we denote by $\theta_+$ the trivial flat connection on $\rp$.

To finish the proof, we must show that $\mcm_\omega (X)$ consists of a single point (counted with positive sign) for each $\omega \in \{ \varnothing, \Omega \}$ and that the restriction of this solution to $\partial X \cong \rp$ is equal to $\theta_+$ and $\theta_-$ when $\omega = \varnothing$ and $\omega = \Omega$, respectively. 

Let $A_M$ be a connection on $M$ and $A_X$ be a connection on $X$ with restrictions to $\rp$ that agree. If these give rise to an element of the $0$-dimensional moduli space on the right of \eqref{eq: reinterpretation}, we must have that $A_X$ is central; for otherwise there would be a gluing parameter, since every element of $\chi (\rp)$ is central, which would give a positive dimensional family inside the fiber product. Thus, $A_X$ is forced to be a central flat connection. If $\omega = \varnothing$, then since $X$ is simply connected, $A_X$ must be trivial. If $\omega = \Omega$, we need to find $SU(2)$ representations of $\pi_1 (X \backslash \Omega)$ such that $m_\Omega$, the meridian of $\Omega$ in $X$, is mapped to $-\id$. Since $m_\Omega$ generates this group, $A_X$ is uniquely specified. Hence, in both situations, we have a unique element of the moduli $\mcm_\omega(X)$ that contributes to \eqref{eq: reinterpretation} and given the descriptions above, their restrictions to $\rp$ are as desired.

Finally, since $A_X$ is central in the context above, both are counted with positive signs in \eqref{eq: reinterpretation}, which completes the proof.
\epf

\subsection{Proofs of the main results}\label{sec: Proofs of the main theorems}

The remaining sections are aimed to prove the following proposition.

\bprop\label{prop: commutative}
Suppose the maps $F_\al,G_\be$ and $f_0$ are from Propositions \ref{prop: -1 triangle} and \ref{prop: original triangle}, respectively. Then we have the following commutative diagram up to sign:
\begin{equation*}
	\xymatrix{\ish(Y_0(K);\comp)\ar[drr]_{F_\al}\ar[rrrr]^{f_0}&&&&\ish(Y_2(K);\comp)\\
&&SHI(\Ga_{-1})\ar[urr]_{G_{\be}}&&
	}
\end{equation*}
\eprop

Assuming Proposition \ref{prop: commutative}, we finish the proofs of the main results in the introduction.

\bpf[Proof of Theorem \ref{thm: formula for singular instanton}]
We consider the octahedral diagram (\ref{eq: octahedral used 1}). The exact triangles in Propositions \ref{prop: -1 triangle} and \ref{prop: original triangle}, together with the commutative diagram in Proposition \ref{prop: commutative}, show that the diagram satisfies the assumption of the octahedral lemma. Then we obtain the fourth exact triangle, where the map $h\circ l$ is described by Corollary \ref{cor: composition of H maps}. This concludes the proof of the exact triangle (\ref{eq: exact trianlge formula}).

If $K$ is rationally null-homologous and there is a (rational) Seifert surface of $K$, we can construct a $\mathbb{Z}$-grading on $KHI(Y,K)$ by the method in \cite{li2019direct,li2019decomposition}. Following \cite[(3.8)]{LY2021large}, we know $d_{1}^\pm$ are homogeneous with different grading shifts. By an easy algebraic lemma (cf.\ \cite[Lemma 2.23]{LY2022integral1}), we know the mapping cones for different scalars are isomorphic.
\epf

\bpf[Proof of Theorem \ref{thm: knot has 2 torsion}]
From Theorem \ref{thm: formula for singular instanton} and the discussion in \S \ref{sec: Sketch of the proofs}, we have \begin{equation}\label{eq: iff 2}
    \begin{aligned}
        \dim \ish(Y,K;\F_2)=&2\dim \ina(Y,K;\F_2)\\\ge& 2\dim \ina(Y,K;\comp)\\=&2\dim KHI(Y,K)\\\ge& \dim \ish(Y,K;\comp)+\rk(c_+d_{1}^++c_-d_{1}^-).
    \end{aligned}
\end{equation}
Hence $\ish(Y,K;\intg)$ has $2$-torsion if
$\rk(c_+d_{1}^++c_-d_{1}^-)>0$.

If $K$ is a null-homologous knot, then we can compare notations in \cite[\S 1.4]{BS2022khovanov} and \cite[\S 4.2]{LY2020}. In particular, we obtain
\[\phi_0^{SV}=\psi_{+,\mu}^0:SHI(-\ybnk,-\Ga_{0})\to SHI(-\ybnk,-\Ga_{\mu})\aand\]
\[C=\psi_{+,0}^{\mu}:SHI(-\ybnk,-\Ga_{\mu})\to SHI(-\ybnk,-\Ga_{0}).\]
By \cite[Lemma 4.34]{LY2020}, we have \[\begin{aligned}
    \phi_0^{SV}\circ C=&\psi_{+,\mu}^0\circ \psi_{+,0}^\mu\\=&\psi_{+,\mu}^{0}\circ \psi_{-,0}^{-1}\circ \psi_{+,-1}^\mu\\=&\psi_{+,\mu}^{-1}\circ \psi_{+,-1}^\mu\\=&d_1^+
\end{aligned}\]
Moreover, we suppose $K\subset Y$ is a fibered knot of genus $g>0$. If $Y\not\cong \#^{2g}S^1\times S^2$ and the monodromy from the fibration of $K$ is not right-veering, then the discussion before \cite[Theorem 1.22]{BS2022khovanov} implies that there exists a nonzero element $x$ in the Alexander grading $g-1$ of $SHI(-\ybnk,-\Ga_{\mu})$ so that $\phi_0^{SV}\circ C(x)\neq 0$ in the Alexander grading $g$. Since $d_{1}^+$ and $d_{1}^-$ have different grading shifts, from Definition \ref{defn: d1 differential}, we have \[\rk(c_+d_{1}^++c_-d_{1}^-)\ge \rk (d_1^+)=\rk(\phi_0^{SV}\circ C)\ge 1.\]Hence $\ish(Y,K;\intg)$ has $2$-torsion.

If $K$ is right-veering, then the mirror knot inside $-Y$ is not right-veering. We can carry out the proof for the mirror knot and then apply the duality of singular instanton homology to show $\ish(Y,K;\intg)$ has $2$-torsion.
\epf

\begin{proof}[Proof of Proposition \ref{prop: next-to-top nonvanish, main}]
	Let $\delta = d_1^+ + d_1^-$ and $\delta_\lambda=d_1^+-\lambda\cdot d_1^-$ for $\lambda\in\cstar$. We have two basic properties:
	\begin{itemize}
		\item For some $\lambda$ that might depend on the knot, we have \[\delta\circ \delta_\lambda = (d_1^+)^2-\lambda\cdot (d_1^-)^2+(d_1^-\circ d_1^+-\lambda\cdot  d_1^+\circ d_1^-)=0,\]which follows from \cite[Theorem 3.20]{LY2021large} and \cite[Theorem 6.5]{LY2022integral2}.
		\item For the Alexander grading $i$, we have\[\delta(KHI(S^3,K,i)),~\delta_\lambda(KHI(S^3,K,i))\subset KHI(S^3,K,i+1)\oplus KHI(S^3,K,i-1),\]which follows from the grading shifts of $d_1^\pm$ (cf.\ \cite[(3.8)]{LY2021large}).
	\end{itemize}
	We prove the proposition by contradiction. If $KHI(S^3,K,g(K) - 1) = 0$, then we have
	\[
		KHI(S^3, K) = KHI(S^3,K,g(K)) \oplus A \oplus  KHI(S^3,K,-g(K))
	\]
	where we define
	\[
		A =\bigoplus_{i = 2- g(K)}^{g(K) - 2} KHI(S^3,K,i).
	\]
    Note that $\dim KHI(S^3, K)$ is odd and $\dim KHI(S^3,K,g(K))=\dim KHI(S^3,K,-g(K))$ (cf. \cite[Theorem 3.20]{LY2021large}, \cite[Corollary 1.4]{scaduto2015instanton}, and \cite[Remark 3.11]{BS2022khovanov}). Then $\dim A$ is also odd.
    
	The fact that $\delta$ shifts grading at most by $1$ and the vanishing assumption of $KHI(S^3,K,g(K)-1)$ imply that $\delta(A),\delta_\lambda(A)\subset A$ and $\delta(KHI(S^3,K,\pm g(K))) = 0$. From $\delta\circ\delta_\lambda=0$, we know that
    \[\im (\delta_\lambda|_A)\subset \ker (\delta|_A).\]Together with \[A/\ker (\delta|_A)\cong \im (\delta|_A),\]we have
    \[\dim A-\dim \ker (\delta|_A)=\dim \im(\delta|_A)=\dim \im (\delta_\lambda|_A)\le \dim  \ker (\delta|_A),\]which implies
	\[
		\dim {\rm ker}(\delta|_A)\geq \frac{\dim A + 1}{2}
	\]by the odd dimension fact of $A$.
    
	Thus, by Theorem \ref{thm: formula for singular instanton} and the assumption $KHI(S^3,K,g(K) - 1) = 0$, we conclude that
	\[
	\begin{aligned}
		\dim I^{\sharp}(S^3, K) &= \dim H_*(\cone(\delta))\\&=4\dim  KHI(S^3, K, g(K)) + \dim H_*(\cone(\delta|_A))\\&=4\dim  KHI(S^3, K, g(K))+2\cdot \dim\ker(\delta|_A)\\
		&\geq 4\dim KHI(S^3,K,g(K))+\dim A + 1\\
		&= \dim KHI(S^3,K) + 2\dim  KHI(S^3, K, g(K)) +1.
	\end{aligned}
	\]
	Hence we derive a contradiction.
\end{proof}

\section{Topology of cobordisms}\label{sec: topology setups}
In this section, we describe the topology of cobordisms, which will be used in \S \ref{sec: The commutative diagram} for the proof of Proposition \ref{prop: commutative}.
\subsection{Cobordisms}
 \begin{figure}[htbp]
 \centering
  \begin{overpic}[width=\textwidth]{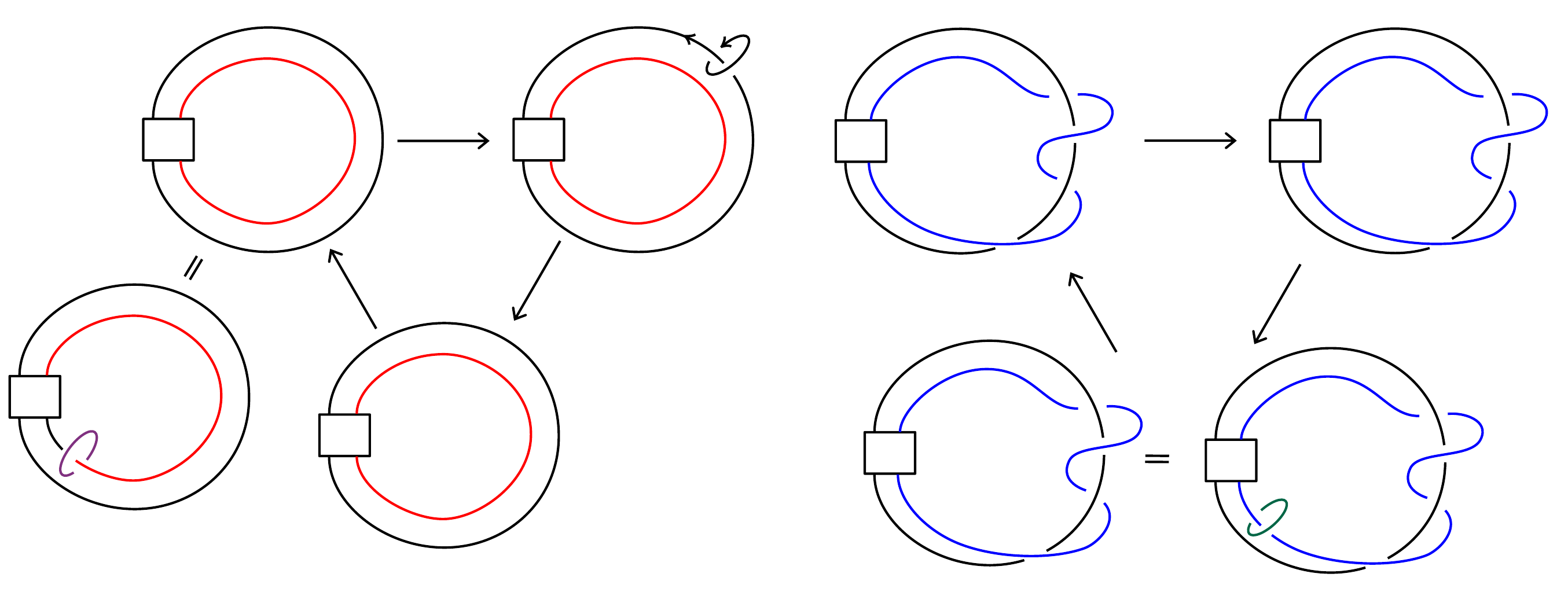}
  	\put(1,12.5){$K$}
  	\put(9.5,28.8){$K$}
  	\put(11,38){$(Y_1(K),\widetilde{K}_1)$}
  	\put(18,32){$\color{red}1$}
  	\put(11,15){$\color{red}0$}
  	\put(6,11){\color{cpurple}-$1$}
 	
  	\put(20.8,10){$K$}
  	\put(24,1){$(Y_0(K),U)$}
  	\put(30,13){\color{red}$0$}
 	
  	\put(33.2,28.8){$K$}
  	\put(38,38){$(Y,K)$}
  	\put(46,37){$\mu$}
  	\put(48,32){$\lambda$}
  	\put(40.5,33){\color{red}$\infty$}

  	\put(53.8,28.8){$K$}
  	\put(58,38){$(Y,K)$}
  	\put(60,33){\color{blue}$\infty$}
 	
  	\put(81.5,28.8){$K$}
  	\put(85,38){$(Y_1(K),\widetilde{K}_1)$}
  	\put(88,33){\color{blue}$1$}
 	
  	\put(55.5,8.8){$K$}
  	\put(77.5,8.5){$K$}
  	\put(62.5,12.5){\color{blue}$2$}
  	\put(85,12.5){\color{blue}$1$}
  	\put(82,7){\color{lygreen}-$1$}
  	\put(69,0){$(Y_2(K),U)$}
  \end{overpic}

	\caption{The Kirby diagrams associated with the exact triangles}\label{fig: surgery diagram 1}
\end{figure}

We first describe the cobordism associated with the composition $G_\be\circ F_\al$ as in the proof of Proposition \ref{prop: -1 triangle}. From (\ref{eq: I sharp U}), we replace $\ish(Y;\comp)$ by $KHI(Y,U)$. Also, we recall from (\ref{eq: KHI}) that \[SHI(\Ga_\mu)= KHI(Y,K)\aand SHI(\Ga_{-1})=KHI(Y_1(K),\widetilde{K}_1).\]

From the proof of Proposition \ref{prop: -1 triangle}, the maps $F_\al$ and $G_\be$ are obtained by surgery cobordisms, where the surgery curves are away from the neighborhood of the knot. Hence, we use a curve with a box to denote the knot $K$ and draw the surgery curves in different colors. We draw the Kirby diagrams for the triangles  (\ref{eq: H dual}) as in Figure \ref{fig: surgery diagram 1}. Note that in the first triangle, we consider the $(\infty,0,1)$ surgery along $\al^\p$, but in the second triangle, we consider the $(\infty,-1,0)$ surgery along $\be^\pp$ as in Remark \ref{rem: new curve}. We use the purple curve and the green curve to denote the cobordisms associated with $F_\al$ and $G_\be$, respectively. Here we consider the blackboard framings of knots. Note that if the surface framing of $\be^\pp$ is $-1$, then the blackboard framing is $-1+2=1$. 

 \begin{figure}[htbp]
 \centering
 \begin{overpic}[width=0.8\textwidth]{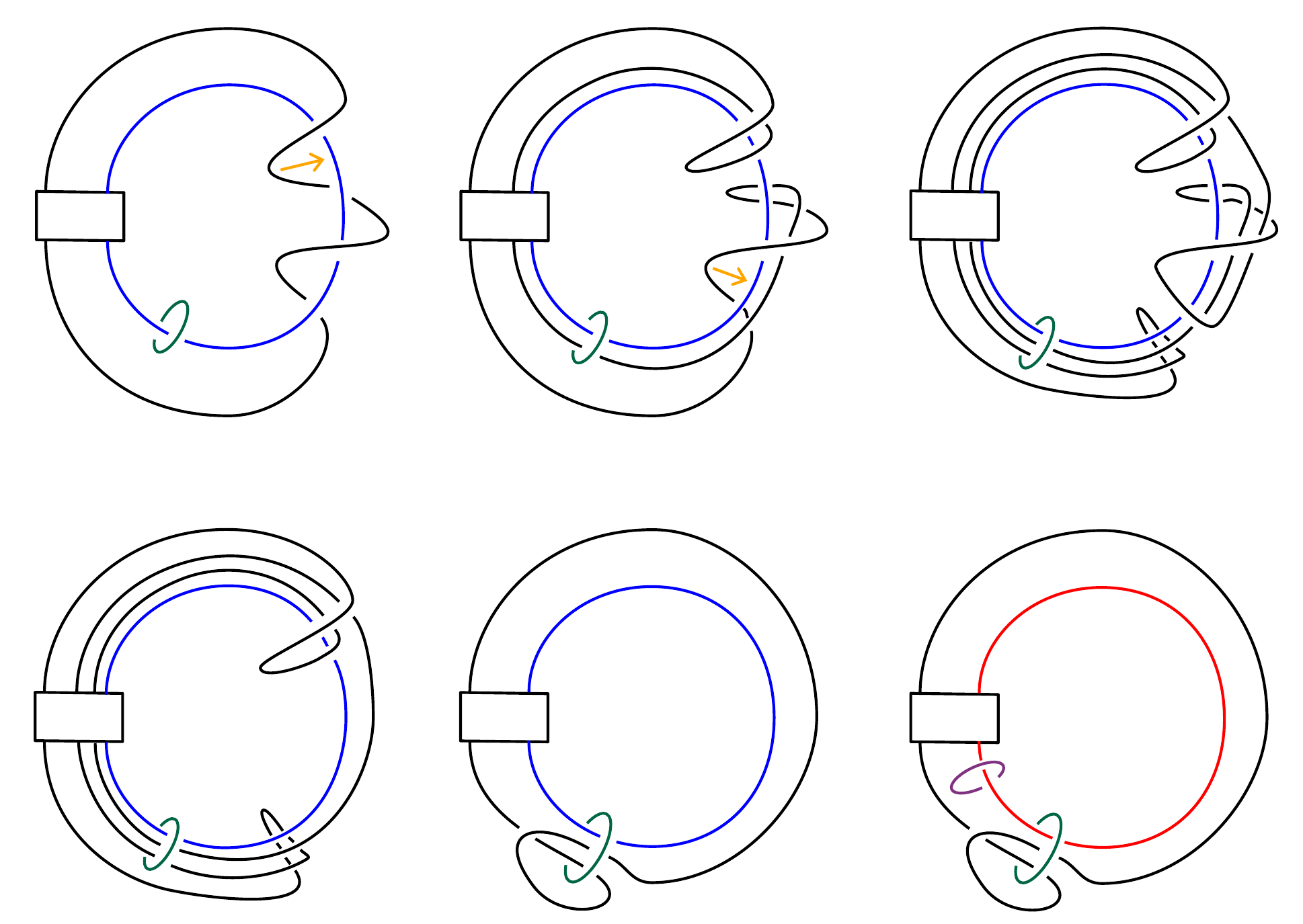}
 	\put(5,53.5) {$K$}
 	\put(37.5,53.5) {$K$}
 	\put(72,53.5) {$K$}
 	\put(5,15) {$K$}
 	\put(37.5,15) {$K$}
 	\put(72,15) {$K$}
 	
 	\put(27,63) {$n$}
 	\put(60,63) {$n-3$}
 	\put(95,63) {$n-4$}
 	\put(95,25) {$n-4$}
 	
 	\put(11,60) {\color{blue} $1$} 
 	\put(13,48.5) {\color{lygreen} $-1$}
 	\put(43.5,60) {\color{blue} $1$} 
 	\put(45.5,48) {\color{lygreen} $-1$}
 	\put(78,60) {\color{blue} $1$} 
 	\put(79,47.5) {\color{lygreen} $-1$} 
 	\put(11,22) {\color{blue} $1$} 
 	\put(12,9) {\color{lygreen} $-1$}
 	\put(44,22) {\color{blue} $1$} 
 	\put(45,9) {\color{lygreen} $-1$}
 	\put(79,22) {\color{red} $0$} 
 	\put(81,9) {\color{lygreen} $-1$}
 	\put(77,12) {\color{cpurple} $-1$}
 \end{overpic}
	\caption{Handle-sliding}\label{fig: surgery diagram 2}
\end{figure}

Note that there are two Kirby diagrams of $(Y_1(K),\widetilde{K}_1)$. We can identify the right one with the left one by handle-sliding the knot over the blue curve twice as in Figure \ref{fig: surgery diagram 2}. Then we can replace the blue curve with the red and purple curves to draw the cobordisms associated with $F_\al$ and $G_\be$ in the same diagram. If the original knot has blackboard framing $n$, then the knot after handle-sliding has blackboard framing $n-4$, though we do not need this framing fact in our proof.

\begin{figure}[htbp]
\centering
	\begin{overpic}[width=0.8\textwidth]{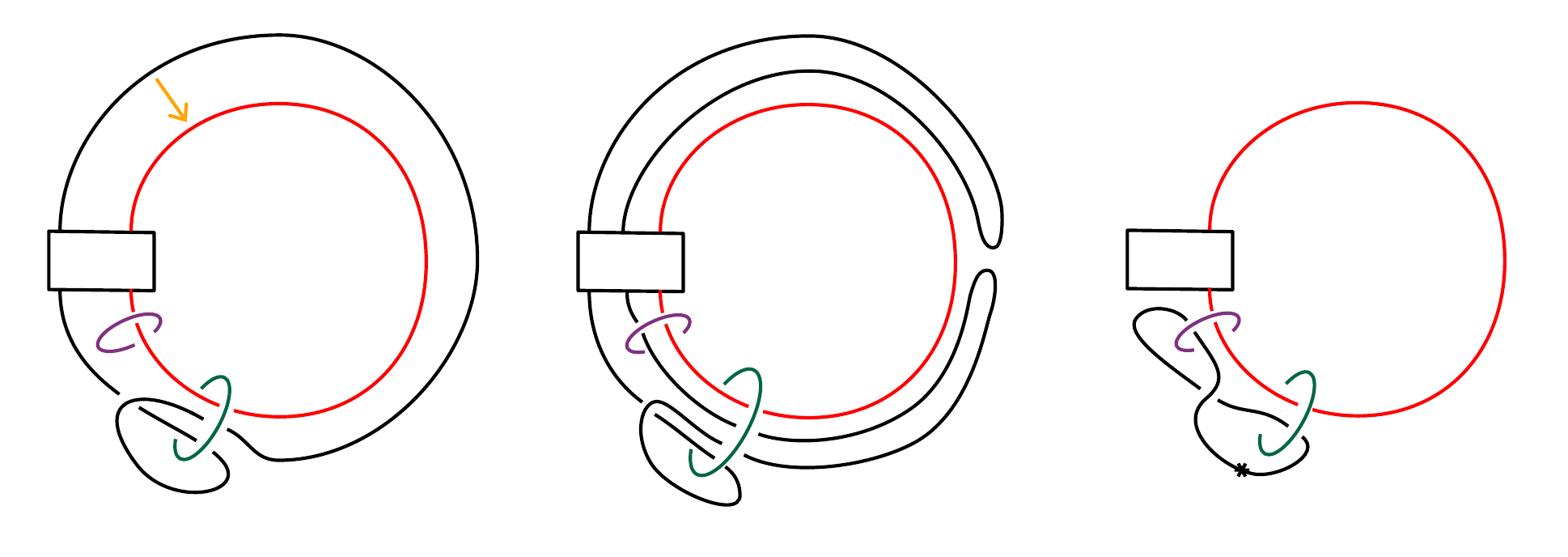}
		\put (11,14) {\color{cpurple} -1}
		\put (15, 11.5) {\color{lygreen} -1}
		\put (25,20) {\color{red} 0}
		\put (44.5,14) {\color{cpurple} -1}
		\put (49, 11.5) {\color{lygreen} -1}
		\put (59,20) {\color{red} 0}
		\put (79.5,14) {\color{cpurple} -1}
		\put (84, 11.5) {\color{lygreen} -1}
		\put (94,20) {\color{red} 0}
		
		\put (5.5, 18) {$K$}
		\put (39, 18) {$K$}
		\put (74.5, 18) {$K$}
		\put (84, 5) {$n-4$}
	\end{overpic}
	\caption{Handle-sliding}\label{fig: surgery diagram 3}
\end{figure}
Since the red curve denotes the surgery in the original $3$-manifold, we can further handle-slide the knot over it as in Figure \ref{fig: surgery diagram 3}. This corresponds to an isotopy of the whole $4$-manifold. The resulting curve is an unknot linking with the green and the purple curves. We put a star on the black curve to denote the position of the suture in the construction of $KHI$ (or the earring in the construction of $\ina$). Then the black curve with the star indicates a surface (indeed an annulus) with an arc inside the cobordism.

To visualize the surface, we do extra Kirby moves as in Figure \ref{fig: surgery diagram 4}, where we first handle-slide the green curve over the purple curve, and then perform the isotopies as indicated by the orange arrows. 

\begin{figure}[htbp]
\centering
	\begin{overpic}[width=0.8\textwidth]{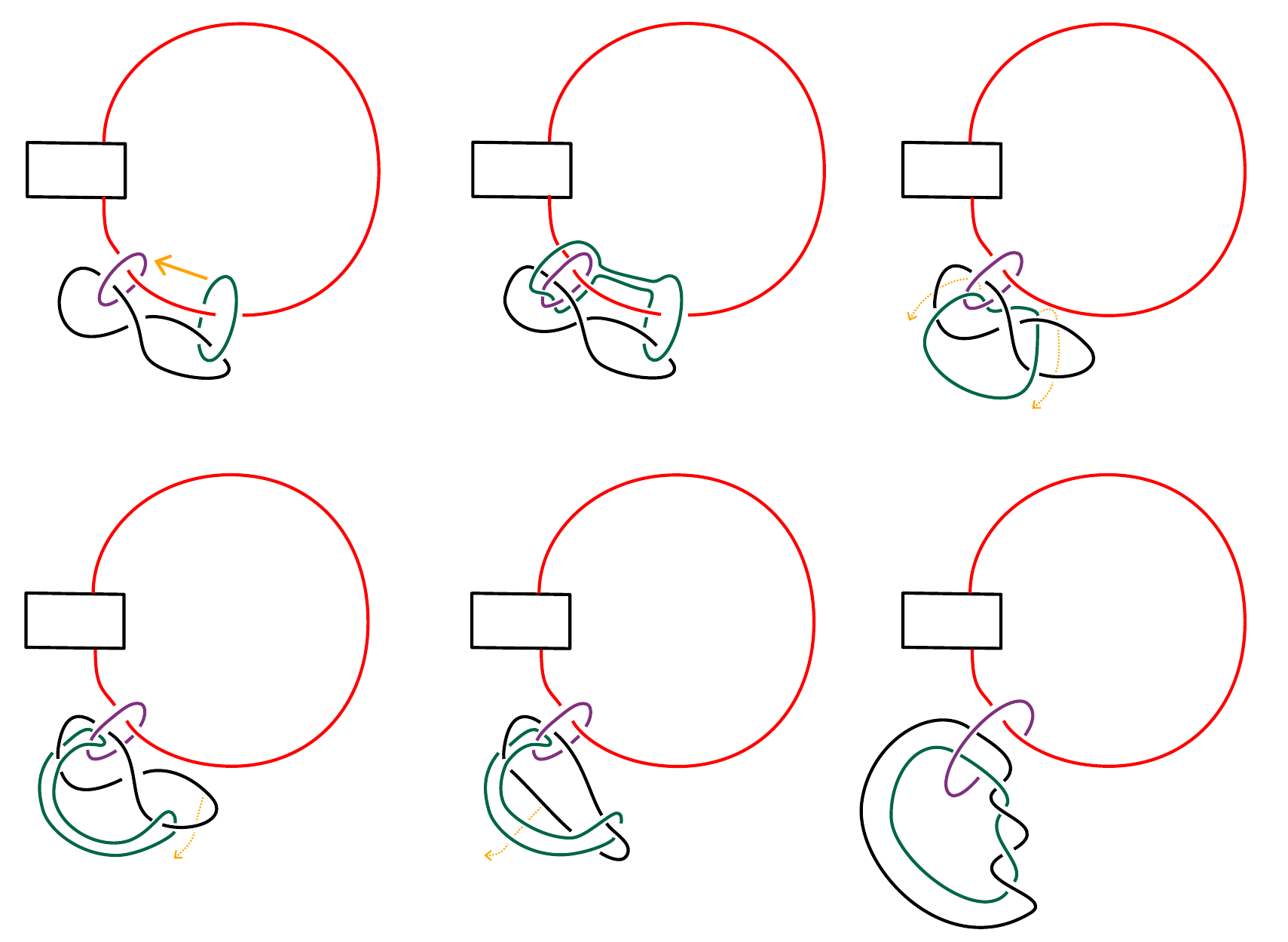}
		\put(5,24) {$K$}
		\put(39.5,24) {$K$}
		\put(73,24) {$K$}
		\put(5,59) {$K$}
		\put(39.5,59) {$K$}
		\put(73,59) {$K$}
		
		\put(10,19) {\color{cpurple}$-1$}
		\put(44,19) {\color{cpurple}$-1$}
		\put(78,19) {\color{cpurple}$-1$}
		\put(10,54) {\color{cpurple}$-1$}
		\put(78,54) {\color{cpurple}$-1$}
		
		\put(16,52.5) {\color{lygreen}$-1$}
		\put(50,52.5) {\color{lygreen}$-2$}
		\put(75,40) {\color{lygreen}$-2$}
		\put(7,8.5) {\color{lygreen}$-2$}
		\put(34,8.5) {\color{lygreen}$-2$}
		\put(70,8.5) {\color{lygreen}$-2$}
		
		\put(25,20) {\color{red} 0}
		\put(60,20) {\color{red} 0}
		\put(94,20) {\color{red} 0}
		\put(26,55) {\color{red} 0}
		\put(61,55) {\color{red} 0}
		\put(94,55) {\color{red} 0}
	\end{overpic}
	\caption{Kirby moves}\label{fig: surgery diagram 4}
\end{figure}

\begin{figure}[htbp]
\centering
\begin{minipage}{0.35\textwidth}
	\begin{overpic}[width=0.98\textwidth]{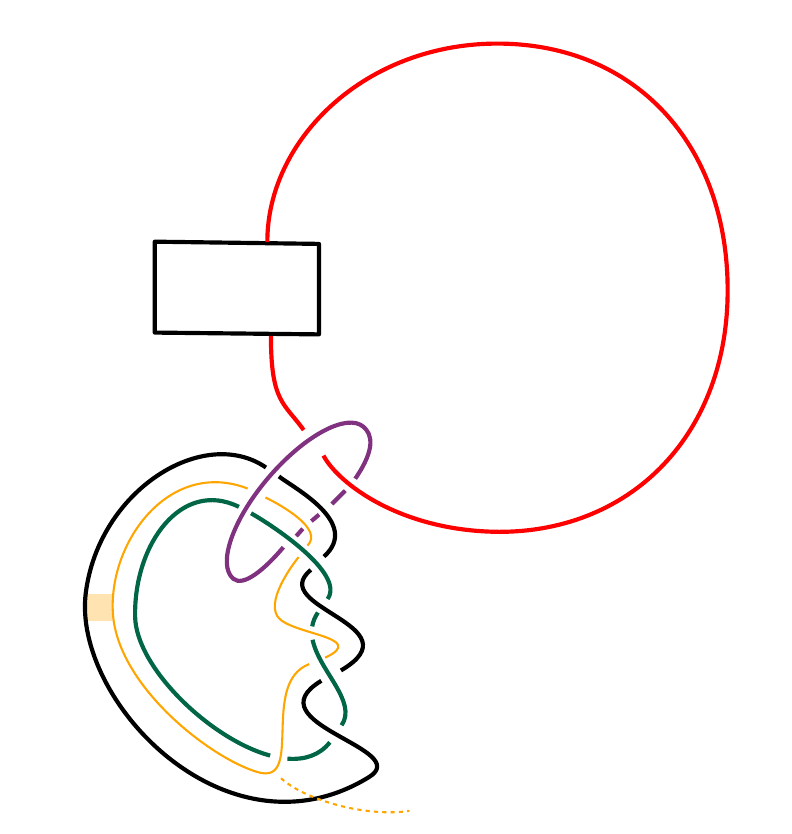}
		\put(25,65) {$K$}
		
		\put(78,50) {\color{red} $0$}
		
		\put(50,0.5) {\color{orange} $D_G$}
		\put(2,25) {\color{orange} $B_1$}
		
		\put(19,25) {\color{lygreen} $-2$}
		
		\put (45,50) {\color{cpurple} $-1$}
	\end{overpic}

    \end{minipage}
\begin{minipage}{0.64\textwidth}
\begin{overpic}[width=0.98\textwidth]{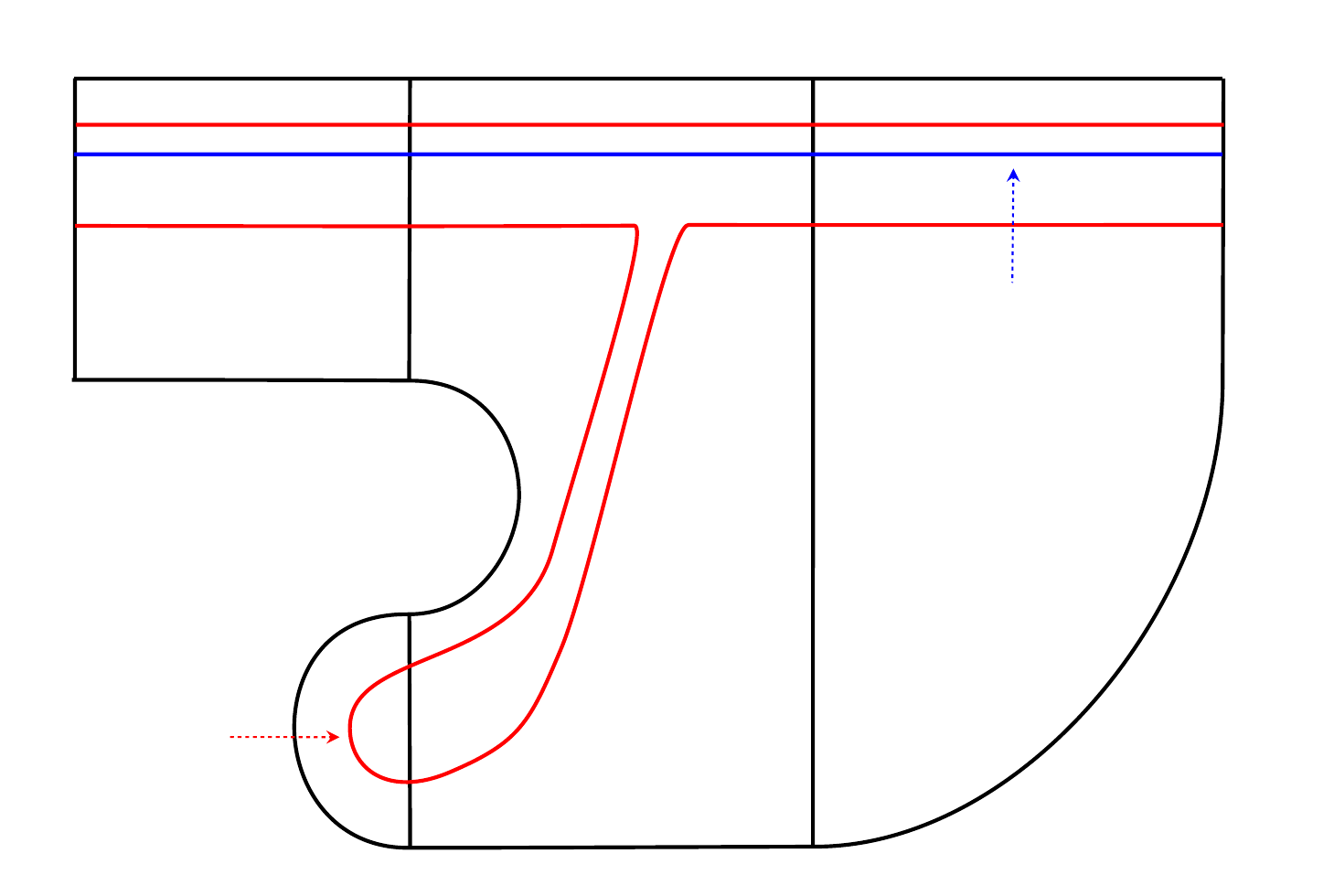}
		\put (2, 63) {$Y_0(K)$}
		\put (22, 63) {$Y_0(K)\sqcup \mathbb{RP}^3$}
		\put (54, 63) {$Y_0(K)\# \mathbb{RP}^3$}
		\put (89, 63) {$Y_2(K)$}
		
		\put (10, 44) {\color{red}$\widetilde{A} = A \# S_0$}
		\put (41, 44) {\color{red}$B_1$}
		\put (11, 11) {\color{red}$D_G$}
		\put (63, 42) {\color{blue} suture or earring}
	\end{overpic}
\end{minipage}
	\caption{Two visualizations of the surface $\widetilde{A}$}\label{fig: surgery diagram 5}
\end{figure}
		
We use the last diagram in Figure \ref{fig: surgery diagram 4} to describe the cobordism associated with $ G_\be \circ F_\al$ as follows. 

First, we consider the cobordism associated with the green curve. Since it is disjoint from the red curve and has framing $-2$, we know it is a cobordism \[W_G:Y_0(K)\to Y_0(K)\# \mathbb{RP}^3\] obtained from the boundary connected sum of $Y_0(K)\times I$ and $\mathbb{D}({-2})$, the disk bundle over a $2$-sphere with Euler number $-2$.

Second, we consider the cobordism $W_P$ associated with the purple curve. It is the usual surgery cobordism with framing $-1$.

Alternatively, from the last diagram in Figure \ref{fig: surgery diagram 3}, we know the cobordism is just the composition of two surgery cobordisms with framings $-1$. Note that if we do not care about the embedded surface, then the Kirby diagram is just the same as Figure \ref{fig: W}.

\subsection{Embedded surface}\label{subsec: embedded surfaces}

Then we consider the surface associated with the black curve, which is depicted in the left diagram in Figure \ref{fig: surgery diagram 5}. The black curve times the interval gives an annulus $A_0$ inside the cobordism. Before doing any surgery, the black curve is an unknot that does not link with any curve. After doing surgeries along the green curve, the black curve is isotopic to an unknot, but the isotopy will change the surface. To describe the surface, we introduce another orange curve around the green curve to indicate a core disk $D_G$. The band sum along $B_1$ gives an unknot disjoint from other curves. Let $A_1$ be the product annulus of this unknot inside $W_P$. Define \begin{equation}\label{eq: defn of A tilde}
    \widetilde{A}\deq A_0\cup D_G\cup B_1\cup A_1.
\end{equation}Equivalently, let $S_0$ be the zero section of $\mathbb{D}({-2})$. Then $\widetilde{A}$ is the connected sum of $S_0$ and the  product annulus $A$ of an unknot away from all curves, as depicted as the red surface (with a blue arc) in the right diagram of Figure \ref{fig: surgery diagram 5}.

\section{The commutative diagram}\label{sec: The commutative diagram}

This section is devoted to the proof of Proposition \ref{prop: commutative}.
\subsection{Stretching argument}
In this subsection, we consider the stretching argument along the boundary of the neighborhood of an embedded $2$-sphere of self-intersection $-2$ in the cobordism schematically described in \Cref{fig: surgery diagram 5}. In preparation for this, we first need some preliminary moduli computations.

Let $Z = \mathbb{D}({-2})$, the disk bundle over a $2$-sphere with Euler number $-2$. Let $U \subset \partial Z \cong \rp$ be a local unknot, $D \subset Z$ be the push-off into $Z$ of the disk that $U$ bounds, and let $S_0$ denote the zero section lying in $Z$.


Let $P$ denote the trivial $SO(3)$ bundle over $Z$. A direct computation shows that there are exactly two elements in\[\chi (\rp, U)=\{\rho: \pi_1(\rp\backslash U)\to SU(2)\mid \operatorname{tr}(m_U)=0\text{ for the meridian }m_U\text{ of }U\}/SU(2),\] where $SU(2)$ acts by conjugation.

Denote these two generators by $\theta_\pm$ that are distinguished by the image of the generator of $\pi_1 (\rp)$: $\theta_+$ corresponds to image $\id\in SU(2)$ and $\theta_-$ corresponds to image $-\id\in SU(2)$.

\bprop \label{prop: moduli-replacement}
The moduli space of minimal energy reducible ASD connections, $\mcm^{red}_{min} (Z, S_0\# D, P)$, consists of points with index $-1$ that correspond one-to-one with $\alpha \in \chi(\rp, U)$.
\eprop

\bpf
We will drop $P$ from the notation in this proof for readability with the understanding that all bundles have trivial $w_2$.

Let $S$ denote a $2$-sphere in a $4$-ball in the neighborhood of $\partial Z$. Then \[(Z, S_0 \# S) \cong (Z, S_0 \#D) \cup (\partial Z \times I, \overline{D}),\]where $\overline{D}$ denotes the reverse of $D$. Since $(Z, S_0\# S) \cong (Z, S_0)$, we can appeal to the proof of \cite[Proposition 6.1]{bhat2023newtriangle} to conclude that the moduli space of minimal energy $\mcm^{red}_{min} (Z, S_0\# S; \beta)$ consists of a single point for each $\beta \in \chi (\rp)$, where $\chi(\rp)$ is the $SU(2)$ character variety of $\rp$. In addition, we have
\begin{equation} \label{eq: neck-stretch}
\mcm^{red}_{min} (Z, S_0\# S) \cong \mcm^{red}_{min} (Z, S_0\# D) \times_{\chi (\rp, U)} \mcm^{red}_{min} (\partial Z \times I, \overline{D}).
\end{equation}
We remark that obstruction bundles are all trivial since the map $H^2_c(-; \R) \to H^2(-; \R)$ are all zero so that the standard gluing arguments apply unchanged to conclude the above statement.

Since the left-hand side consists of reducibles with $U(1)$ stabilizers and $\chi (\rp, U)$ also consists of $U(1)$ reducibles, we see that every moduli space in (\ref{eq: neck-stretch}) is stabilized by $U(1)$ which is identified by the inclusion of $(\rp, U)$ into the relevant factors; in particular, there is no gluing parameter.  

Note that any $\alpha \in \chi (\rp, U)$ extends to a unique flat connection $A_\alpha$ on $(\partial Z \times I, \overline{D})$ with $\ind (A_\alpha) = -1$ and the holonomies along the generator of $\pi_1 (\rp)$ on both ends are equal. 

Noting that the energy $\kappa (A)$ depends only on $\ad (A)$, and that all elements of $\chi (\rp, U)$ and $\chi (\rp)$ induce the same representation on the adjoint bundle, we conclude that $\kappa (A) \geq 1/4$ if $\kappa(A) \neq 0$, where $[A] \in \mcm^{red} (\partial Z \times I, \overline{D})$, for any such $A$, $\ad (A)$ is obtained by adding monopole and/or instanton charges to $\ad (A_\alpha)$. The proof of \cite[Proposition 6.1]{bhat2023newtriangle} implies that $\kappa = 1/8$ for any connection in $\mcm^{red}_{min} (Z, S_0\# S)$ with minimal energy. Thus, $\mcm^{red}_{min} (\partial Z \times I, \overline{D})$ consists of flat connections, i.e.\, $A_\alpha$ for every $\alpha \in \chi (\rp, U)$.

The conclusion above can be rephrased as $\mcm^{red}_{min} (\partial Z\times I, \overline{D}) \to \chi(\rp, U)$ is a homeomorphism, so we can replace the fiber-product in (\ref{eq: neck-stretch}) by $\mcm^{red}_{min} (Z, S_0\# D)$, which concludes the proof with a final appeal to \cite[\S 6]{bhat2023newtriangle} and the linear index gluing formula.
\epf

\bprop \label{prop: equal-upto-sign}
The composition map $\phi=G_\beta\circ F_\alpha$ from Proposition \ref{prop: commutative} is equal to $f_0^+\pm  f_0^-$ up to a global sign, where the sign ambiguity before $f_0^-$ (the relative sign) does not depend on the knot.
\eprop

\bpf
To begin with, we will ignore the orientation of all moduli spaces and only comment on this at the end of the proof. 

To compare these cobordism maps, we first note that the cobordisms used in the maps $f_0^\pm$ are obtained by first removing a neighborhood of $S_0$ defined in \S \ref{subsec: embedded surfaces} and then gluing in a $(\rp \times I, D)$, where $D$ is a capping off disk for the local unknot $U$ on the boundary of the piece being removed.

Neck stretching along the boundary of the neighborhood of $S_0$, we break the connection into two connections on the resulting 4-manifold pieces. Calling the connection on the neighborhood of $S_0$ as $A_Z$ and on the rest of the manifold as $A$, we have by linear excision that 
\[
0 = \ind (A) + \ind (A_Z) + h^0(\alpha) + h^1(\alpha),
\]
where $\alpha \in \chi (\rp, U)$; linear excision here refers to the form of index formula as described in \cite[\S 4.3, eq (11)]{scaduto2015instanton}. Note that $h^0(\alpha) = 1$, $h^1(\alpha) = 0$, and $\ind A \geq 0$---the last one is due to the bundle data for $A$ being non-integral and assuming appropriate perturbations have been chosen to ensure all irreducible moduli are transverse. Thus, $\ind (A_Z) \leq -1$. By Proposition \ref{prop: moduli-replacement}, we have $\ind (A_Z) \geq -1$, forcing $\ind A = 0$ and $\ind (A_Z) = -1$.

To complete the proof, what we need to show is that 
\[
\mcm = \left \{ [A] \in \mcm (\rp \times I, D) \, \lvert \, \ind (A) = -1 \right \},
\]
consists of reducibles and that $\mcm \to \chi (\rp)$ and $\mcm \to \chi(\rp, U)$ are homeomorphisms. That latter is equivalent to showing the maps are bijections.

Since we can ensure that the irreducible connections in $\mcm$ are cut out transversely with $\ind \geq 0$, all elements of $\mcm$ must be reducible. Thus, we are concerned with the reducible solutions on $(\rp \times I, D)$. We first note that for any $\alpha \in \chi (\rp, U)$, there is a unique flat connection on $(\rp \times I, D)$: say $A_\alpha$. Note that $A_\alpha$ are central connections and so $\ad(A_\alpha)$ is trivial. Since $H^2_{c}(\rp \times I, D; \mathbb{R}) = 0$ and $H^1_{c}(\rp \times I, D; \mathbb{R}) = 0$, $A_\alpha$ is unobstructed with $\ind (A_\alpha) = -1$. Since they are flat, they are of minimal energy, and any higher-energy solution to the ASD equations will have a larger index as long as the endpoints are fixed. Lastly, we need to show that the representations at the boundary are the same as for $A_\alpha$. We will show that any reducible ASD connection on $(\rp \times I, D)$ must be flat. Suppose $A$ is such a connection then $\frac{i}{2\pi} F_A$ is a $L^2$ $i\mathbb{R}$-valued 2-form on $\rp \times I$. Since the map $H^2_c (\rp \times \R; \R) \to H^2 (\rp \times \R; \R)$ is zero, we must have that $A$ is flat.

We finally comment on the orientation of moduli spaces, which is crucial in making sense of the statement of this lemma. In \cite{bhat2023newtriangle}, we use the orientations induced by almost complex structures to define $f_0^\pm$---these are essentially the ones in \cite{kronheimer2011khovanov} with a few minor modifications as detailed in \cite[\S 3.7--3.8]{bhat2023newtriangle}. A key point to note is that we \emph{cannot} ensure that the almost complex structures used there can be used to orient the moduli spaces appearing here simultaneously. Instead of attempting to compare them, we utilize the homological orientations used in \cite[\S 3]{kronheimer2011khovanov}. For cobordisms, we need to choose an $I$-orientation as defined in \cite[Definition 3.9]{kronheimer2011khovanov}---this requires us to make a choice of a `basepoint' connection $A$ along with a choice of trivialisation of the $\det (\mathcal{D}_A)$---which behaves well under neck-stretching. Thus, assuming we chose a basepoint connection on the cobordism defining $\phi$ and on $(\rp \times I, D)$, we would either have $\phi = \pm (f_0^+ + f_0^-)$ or $\phi = \pm (f_0^+ - f_0^-)$; that is, the only ambiguity is the global sign---due to the two different orientation conventions---but not the relative sign, as cobordism and bundle data underlying $f_0^\pm$ are identical.
\epf

\subsection{Fixing the sign}
In this subsection, we fix the relative sign in Proposition \ref{prop: equal-upto-sign} by a direct computation for the right-handed trefoil. We adopt the notation in \S \ref{sec: Triangles in singular instanton homology}. In particular, for a fixed framed knot $K\subset Y$, let $W^n_{n+2}:Y_n(K)\to Y_{n+2}(K)$ be the composition of two surgery cobordisms $W^m_{m+1}:Y_{m}(K)\to Y_{m+1}(K)$ for $m=n,n+1$ and let $\Omega\subset W^n_{n+2}$ be the embedded $2$-sphere with self-intersection $-2$.
\begin{proposition}\label{prop: choose sign}
	If there exists $\epsilon\in\{\pm 1\}$, independent of the choice of the framed knot $K\subset Y$, such that the following exact triangle holds:
	\begin{equation}\label{eq: ET from sutures}
		\xymatrix{
			I^{\sharp}(Y_0(K);\mathbb{C})\ar[rr]^{I^{\sharp}(W^0_2) + \epsilon\cdot I^{\sharp}_\Omega(W^0_2)}&&I^{\sharp}(Y_{2}(K);\mathbb{C})\ar[dl]\\
			&H_*(\cone(d_1^+ + d_1^-:KHI(Y,K)\to KHI(Y,K)))\ar[ul]&
		}
	\end{equation}
	Then $\epsilon$ must be $-1$.
\end{proposition}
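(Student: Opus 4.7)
The plan is to test the putative exact triangle (\ref{eq: ET from sutures}) on the right-handed trefoil $K = T(2,3) \subset S^3$ with framing $0$, and derive a contradiction for $\epsilon = +1$ by comparing explicit dimensional data against the already-established exact triangle of Proposition \ref{prop: original triangle}.

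The $KHI$ side is fully determined by Proposition \ref{prop: l space knot computing} together with \cite[Theorem 5.11]{LY2021large}: since $T(2,3)$ is an instanton L-space knot with $\Delta_K(t) = t - 1 + t^{-1}$, we have $KHI(S^3, T(2,3)) \cong \mathbb{C}^3$ with a one-dimensional summand in each of Alexander gradings $-1, 0, 1$, and the maps $d_1^\pm$ are grading shifts of $\pm 1$, nonzero on every applicable summand. A short $3\times 3$ matrix computation then gives $\mathrm{rank}(d_1^+ + d_1^-) = 2$ and hence $\dim H_*(\cone(d_1^+ + d_1^-)) = 2$. Independently, $\dim I^\sharp(S^3, T(2,3); \mathbb{C}) = 2$ by the spectral sequence from Khovanov homology or by direct traceless character-variety calculations, matching the cone computation and confirming exactness of (\ref{eq: ET from sutures}) with $\epsilon = -1$ via Proposition \ref{prop: original triangle} under the identification $f_0 = f_0^+ - f_0^-$ of Proposition \ref{prop: interpretation of f_0}.

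To eliminate $\epsilon = +1$, the next step is to compute the ranks of $f_0^+ - f_0^- = I^\sharp(W^0_2) - I^\sharp_\Omega(W^0_2)$ and $f_0^+ + f_0^- = I^\sharp(W^0_2) + I^\sharp_\Omega(W^0_2)$ separately and show they differ. The dimensions $a = \dim I^\sharp(S^3_0(T(2,3)); \mathbb{C})$ and $b = \dim I^\sharp(S^3_2(T(2,3)); \mathbb{C})$ are computable via the surgery exact triangle of \cite{scaduto2015instanton} combined with known values for $I^\sharp$ of the Brieskorn-type $3$-manifold $S^3_1(T(2,3))$ and $I^\sharp(S^3) \cong \mathbb{C}$; these pin down $\mathrm{rank}(f_0^+ - f_0^-) = (a+b-2)/2$. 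One then computes $\mathrm{rank}(f_0^+ + f_0^-)$ by directly analyzing the moduli spaces of singular instantons on $W^0_2$ under the two possible holonomy choices around the $\mathbb{RP}^3$ cone point, exploiting the Alexander grading structure on the surgery manifolds and the positivity of moduli-space counts.

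The main obstacle is separating $f_0^+$ from $f_0^-$ as individual endomorphisms rather than accessing only their sum and difference as cobordism maps: the two are defined by the same topological cobordism and bundle data up to holonomy type around the singularity, so distinguishing them requires a careful local model computation analogous to Proposition \ref{prop: moduli-replacement}, tracking not merely total dimensions but contributions to specific grading summands. Once $\mathrm{rank}(f_0^\pm)$ are pinned down individually for the trefoil, the inequality $\mathrm{rank}(f_0^+ + f_0^-) \neq \mathrm{rank}(f_0^+ - f_0^-)$ produces the needed contradiction under $\epsilon = +1$, forcing $\epsilon = -1$ as desired.
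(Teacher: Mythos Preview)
Your proposal contains two concrete computational errors and then leaves the decisive step unperformed.

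First, the cone computation is wrong. For the right-handed trefoil the L-space zig-zag from \cite[Theorem 5.11]{LY2021large} is
\[
\mathbb{C}_{1}\xrightarrow{\,d_1^-\,}\mathbb{C}_{0}\xleftarrow{\,d_1^+\,}\mathbb{C}_{-1},
\]
and these are the \emph{only} nonzero components of $d_1^\pm$; in particular $d_1^+|_{\mathbb{C}_0}=d_1^-|_{\mathbb{C}_0}=0$. Thus $\im(d_1^++d_1^-)=\mathbb{C}_0$, the rank is $1$, and $\dim H_*(\cone(d_1^++d_1^-))=6-2=4$, not $2$. Second, $\dim I^\sharp(S^3,T(2,3);\mathbb{C})=4$, not $2$: the trefoil is quasi-alternating, so by \cite[Theorem B.2]{LY2025torsion} one has $I^\sharp\cong Kh(\bar T;\mathbb{C})\cong\mathbb{C}^4$. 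Your two errors happen to cancel in the consistency check, but both numbers are wrong.

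More seriously, even after correcting these, your argument does not actually rule out $\epsilon=+1$. You assert that one ``then computes $\operatorname{rank}(f_0^++f_0^-)$ by directly analyzing the moduli spaces,'' but this is precisely the hard part and you do not do it; separating $f_0^+$ from $f_0^-$ via local moduli models is exactly what the whole machinery of \S\ref{sec: The commutative diagram} is designed to avoid. With framing $0$ you also drag in $I^\sharp(S^3_0(T))$, which is less clean than the positive surgeries.

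The paper's argument bypasses all of this with one idea: take framing $1$ rather than $0$, so the triangle runs through $S^3_1(T)$ and $S^3_3(T)$. Then $\dim I^\sharp(S^3_1(T))+\dim I^\sharp(S^3_3(T))=1+3=4=\dim I^\sharp(S^3,T)$, and the already-known exact triangle of Proposition~\ref{prop: original triangle} forces $I^\sharp(W^1_3)-I^\sharp_\Omega(W^1_3)=0$. Hence $I^\sharp(W^1_3)+I^\sharp_\Omega(W^1_3)=2\,I^\sharp(W^1_3)$, whose rank is determined by showing $I^\sharp(W^1_3)=I^\sharp(W^2_3)\circ I^\sharp(W^1_2)$ is injective via two standard surgery triangles. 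No moduli-space analysis beyond what is already packaged in the cobordism maps is needed.
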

\bpf
We can compute the sign for a special knot. Let $T\subset S^3$ be the right-handed trefoil. We consider its $1$-framing under the assumption, with respect to the Seifert framing so that $W^0_2$ should be replaced by $W^1_3:S^3_1(K)\to S^3_3(K)$. We omit the coefficients $\mathbb{C}$ in all instanton homologies. 

From \cite[Theorems 1.1 and 1.13]{baldwin2020concordance}, we know that
\begin{equation}\label{eq: dim of I-sharp for trefoil}
	\dim I^{\sharp}(S^3_n(T)) = n \text{ for } n\in\posi
\end{equation}
Also, from \cite[Theorem B.2]{LY2025torsion}, we know that $\dim I^{\sharp}(S^3,T) = 4$, which, together with Propositions \ref{prop: original triangle} and \ref{prop: interpretation of f_0}, implies that 
\begin{equation}\label{eq: equal maps}
    I^{\sharp}(W^1_{3}) - I^{\sharp}_\Omega(W^1_{3}) = 0.
\end{equation}
Hence \begin{equation}\label{eq: equal maps 2}
     I^{\sharp}(W^1_{3}) + I^{\sharp}_\Omega(W^1_{3})=2\cdot I^{\sharp}(W^1_{3})
\end{equation}

From \cite[Theorem 2.1]{scaduto2015instanton} and \cite[Proposition 3.9]{LY2025dimension}, we have the following two exact triangles.

\[
	\xymatrix@C=2cm{
		I^{\sharp}(S^3_1(T))\ar[r]^{I^{\sharp}(W^1_2)}&I^{\sharp}(S^3_2(T))\ar[r]^{I^{\sharp}(W^2_3)}\ar@<-1ex>[d]&I^{\sharp}(S^3_3(T))\ar[dl]\\
		&I^{\sharp}(S^3)\ar[lu]\ar@<-1ex>[u]&
	}
\]
Hence (\ref{eq: dim of I-sharp for trefoil}) implies that $I^{\sharp}(W^1_2)$ and $I^{\sharp}(W^2_3)$ are both injective and so is $I^{\sharp}(W^1_3)$.


From (\ref{eq: equal maps}), (\ref{eq: equal maps 2}), and the injectivity of $I^{\sharp}(W^1_3)$, to conclude $\epsilon = -1$ in (\ref{eq: ET from sutures}), it suffices to show that for the right-handed trefoil, we have
\begin{equation}\label{eq: computation of cone}
    \dim H_*(\cone(d_1^+ + d_1^-)) = 4
\end{equation}
as well. Since the right-handed trefoil is an instanton L-space knot, the description for $d_1^\pm$ can be found in \cite[Theorem 5.11]{LY2021large} as in the proof of Proposition \ref{prop: l space knot computing}. Explicitly, we have
\[
KHI(S^3, T, i) \cong \begin{cases}
	\mathbb{C} & i\in\{-1,0,1\}\\
	0 & {\rm otherwise}
\end{cases}
\]
and $\im (d_1^+) = \im(d_1^-) = KHI(S^3, T, 0)$. Thus, the computation in (\ref{eq: computation of cone}) follows.
\epf

\bpf[Proof of Proposition \ref{prop: commutative}]
By Proposition \ref{prop: -1 triangle} and Corollary \ref{cor: composition of H maps}, there is a variant of the octahedral diagram (\ref{eq: octahedral used 1})
\begin{equation}\label{eq: octahedral used different}
\xymatrix{
	&H_*(\cone(c_+ d_1^++c_- d_1^-))\ar@{..>}[dr]\ar[ddl]&\\
	\ish (Y_2(K);\comp) \ar@{..>}[ur]\ar[d]&&\ish (Y_0(K);\comp)\ar@{..>}[ll]^{G_\be\circ F_\al}\ar[ddl]^>>>>>>>>>>>>{F_\al}\\
 KHI(Y,K)\ar[dr]_{H_\beta}\ar[rr]^{c_+ d_1^++c_- d_1^-}&&KHI(Y,K)\ar[u]\ar[uul]\\
	&KHI(Y_1(K),\widetilde{K}_1)\ar[uul]^<<<<<<<<<<<<{G_\be}\ar[ur]_{H_\al}&
	}
    \end{equation}
    which implies the dotted exact triangle.
From Propositions \ref{prop: equal-upto-sign} and \ref{prop: interpretation of f_0}, we know that\[G_\be\circ F_\al=\pm (f_0^++\epsilon\cdot f_0^-)=\pm (I^{\sharp}(W) + \epsilon\cdot I^{\sharp}_\Omega(W))\]for some $\epsilon\in\{\pm 1\}$ that is independent of $(Y,K)$, where the global sign does not matter since it will not affect the kernel and the image. When $K$ is rationally null-homologous, we know $d_1^\pm$ have different grading shifts and then\[\cone(c_+ d_1^++c_- d_1^-)\cong \cone(d_1^++d_1^-).\]By Proposition \ref{prop: choose sign}, we know that $\epsilon=-1$, which concludes the proof.
\epf

\bibliographystyle{alpha}

\end{document}